\documentclass[12pt,a4paper]{article}
\usepackage[utf8]{inputenc}
\usepackage[T1]{fontenc}
\usepackage{amsmath,amssymb,amsthm,mathtools}
\usepackage{tikz-cd}
\usepackage{hyperref}
\usepackage{geometry}
\usepackage{enumitem}
\usepackage{mathrsfs}
\DeclareMathOperator{\Hom}{Hom}

\theoremstyle{plain}
\newtheorem{theorem}{Theorem}[section]
\newtheorem{lemma}[theorem]{Lemma}
\newtheorem{proposition}[theorem]{Proposition}

\newtheorem{conjecture}[theorem]{Conjecture}
\theoremstyle{definition}

\newtheorem{example}[theorem]{Example}
\newtheorem{remark}[theorem]{Remark}

\DeclareMathOperator{\Qpot}{Q^{\diamond}}
\DeclareMathOperator{\Qact}{Q_{\bullet}}
\DeclareMathOperator{\id}{id}
\DeclareMathOperator{\Map}{Map}
\DeclareMathOperator{\Fun}{Fun}

\DeclareMathOperator{\Ran}{Ran}

\DeclareMathOperator{\Shv}{Sh}
\DeclareMathOperator{\Center}{Z}
\DeclareMathOperator{\Day}{\otimes_{\mathrm{Day}}}

\newcommand{\Htopos}{\mathbf{H}_{\mathbb{Q}}}
\newcommand{\HtoposQ}{\mathbf{H}_{\mathbb{Q},Q}}   
\newcommand{\Mfd}{\mathbf{Mfd}}
\newcommand{\CAlgfd}{\mathbf{C}^{*}\mathbf{Alg}_{\mathrm{fd}}}
\newcommand{\cCAlg}{\mathbf{cC^{*}Alg}_{\mathrm{fd}}}
\newcommand{\FinSet}{\mathbf{FinSet}}

\newcommand{\Mat}{\mathbf{Mat}}

\begin{document}

\title{A Cohesive \(\infty\)-Topos with a Quantum Modality\\ from Finite-Dimensional \(C^{*}\)-Algebras}
\author{Joey Woo}
\date{}

\maketitle

\begin{abstract}
We construct a cohesive \(\infty\)-topos \(\mathbf{H}_{\mathbb{Q}}\) equipped with a \emph{quantum modality} --- an idempotent product‑preserving comonad \(Q^{\diamond}\) with right adjoint \(Q_{\bullet}\) satisfying the Beck--Chevalley compatibility conditions with the cohesive structure \((\Pi,\flat,\sharp)\).  The model is the functor \(\infty\)-topos \(\Fun(\mathbf{C}^{*}\mathbf{Alg}_{\mathrm{fd}},\; \mathbf{H}_{\mathrm{sm}})\), where \(\mathbf{H}_{\mathrm{sm}}\) is the smooth cohesive \(\infty\)-topos and \(\mathbf{C}^{*}\mathbf{Alg}_{\mathrm{fd}}\) is the category of finite‑dimensional \(C^{*}\)-algebras with centre‑preserving \(*\)-homomorphisms.  Cohesion is lifted pointwise from \(\mathbf{H}_{\mathrm{sm}}\); the quantum comonad is precomposition with the centre functor.  We endow the topos with the Day convolution monoidal structure \(\Day\) induced by the tensor product of \(C^{*}\)-algebras and prove that \(Q^{\diamond}\) is a strong monoidal comonad.  The category of \(Q^{\diamond}\)-coalgebras is equivalent, via Gelfand duality, to the topos \(\Fun(\FinSet^{\mathrm{op}},\mathbf{H}_{\mathrm{sm}})\) of discrete classical field theories.  The comonad is interpreted as decoherence.  This yields a cohesive linear $\infty$-topos in which both the cartesian and the affine linear‑logic structures degenerate to the cartesian product; the model satisfies the Seely isomorphism only in a trivial, cartesian sense.  We also prove a synthetic no‑cloning theorem and discuss the limits of the centre modality for representing quantum channels.  This work provides the first rigorous instance of the cohesive linear framework and settles the open problem of finding a concrete model for cohesive linear homotopy type theory.
\end{abstract}

\section{Introduction}
\label{sec:intro}

Synthetic treatments of geometry and quantum theory have developed along largely separate lines.  Schreiber's \emph{cohesive homotopy type theory} \cite{Schreiber13} axiomatises the interplay between smooth and discrete geometry within an \(\infty\)-topos via an internal adjoint triple of modalities \((\Pi \dashv \flat \dashv \sharp)\).  Independently, categorical quantum mechanics \cite{AbramskyCoecke2004,Selinger2004} and linear logic \cite{Benton1995} employ symmetric monoidal closed categories and linear/non‑linear adjunctions to model quantum protocols.  A \emph{cohesive linear \(\infty\)-topos} combines both paradigms: it is an \(\infty\)-topos equipped with a cohesive triple and an idempotent product‑preserving comonad \(Q^{\diamond}\) (the \emph{quantum modality}) with right adjoint \(Q_{\bullet}\) satisfying Beck--Chevalley conditions.  While Schreiber's programme \cite{Schreiber13} calls for a rich interplay between geometry and quantum logic, no fully worked, rigorous example has appeared in the literature.

This paper provides the first concrete model, settling the open question of whether the cohesive linear axioms can be satisfied in a non‑trivial algebraic setting.  The construction is elementary: the algebraic site carries the trivial topology, so the sheaf condition is vacuous and the topos is a presheaf topos.  The quantum modality is given by the centre functor of \(C^{*}\)-algebras, and the commutative (classical) core recovers the topos of discrete classical field theories.  In the same way that the reduction modality \(\Re\) in differential cohesion extracts the reduced part of a space by quotienting out infinitesimals \cite{Schreiber13}, the quantum modality \(Q^{\diamond}\) extracts the classical part of a quantum system by restricting to the commutative subalgebras.

A crucial technical point is that the centre functor is not functorial on the category of all \(*\)-homomorphisms between \(C^{*}\)-algebras; an arbitrary homomorphism need not preserve the centre.  We therefore work with the wide subcategory of finite‑dimensional \(C^{*}\)-algebras and \emph{centre‑preserving} unital \(*\)-homomorphisms.  This restriction makes the centre a genuine endofunctor, and all the structural results — pointwise cohesion, Beck–Chevalley, Day convolution, and the affine linear logic model — remain valid.  The classical core is unchanged, and the synthetic no‑cloning theorem is unaffected.

The paper is organised as follows.  Section~\ref{sec:prelim} recalls the necessary \(\infty\)-categorical background.  Section~\ref{sec:site} defines the restricted site and the centre comonad, establishing its monoidality and left exactness.  Section~\ref{sec:topos} constructs the cohesive \(\infty\)-topos with the quantum modality and verifies the Beck--Chevalley conditions.  Section~\ref{sec:Day} introduces Day convolution and proves that the comonad is strong monoidal.  Section~\ref{sec:linear-logic} discusses the linear logic structure, exhibiting the degenerate cartesian model and the inherited (also degenerate) Day convolution on the classical core.  Section~\ref{sec:TQFT} identifies the category of coalgebras with discrete classical field theories.  Section~\ref{sec:qubit} presents a concrete example.  Section~\ref{sec:conjectures} states precise conjectures for future work.  Section 10 proves a synthetic no‑cloning theorem, and Section 11 discusses the limits of the centre modality for representing quantum channels.

\section{Preliminaries}
\label{sec:prelim}

We work in the setting of \(\infty\)-categories as developed in \cite{Lurie2009, LurieHA}.  An \(\infty\)-topos is an accessible left exact localisation of a presheaf \(\infty\)-category.  A \emph{cohesive \(\infty\)-topos} \cite[Definition~3.4.1]{Schreiber13} is an \(\infty\)-topos \(\mathbf{H}\) equipped with an adjoint triple of endofunctors
\[
\Pi \dashv \flat \dashv \sharp : \mathbf{H} \to \mathbf{H}
\]
such that \(\Pi\) preserves finite products, \(\flat\) is fully faithful, and the unit \(\id\to\flat\Pi\) is an equivalence on the essential image of \(\flat\) (the \emph{discrete objects}).

A \emph{symmetric monoidal \(\infty\)-category} is defined as in \cite[Chapter~2]{LurieHA}.  Day convolution gives a symmetric monoidal closed structure on the functor \(\infty\)-category \(\Fun(\mathcal{A},\mathcal{V})\) when \(\mathcal{A}\) is a small symmetric monoidal \(\infty\)-category and \(\mathcal{V}\) is a symmetric monoidal closed presentable \(\infty\)-category\cite[Section~2.2.6]{LurieHA}.  Explicitly,
\[
(F \Day G)(a) \;=\; \int^{x,y\in\mathcal{A}} \Map_{\mathcal{A}}(x\otimes y,a) \times F(x)\otimes_{\mathcal{V}} G(y),
\]
where the integral sign denotes the \(\infty\)-categorical coend (see \cite[Section~2.2.6]{LurieHA}).  The monoidal unit is \(\mathbf{1}_{\Day}(a) = \Map_{\mathcal{A}}(\mathbf{1}_{\mathcal{A}},a) \times \mathbf{1}_{\mathcal{V}}\).

A \emph{strong monoidal functor} \(K:\mathcal{A}\to\mathcal{A}\) is one equipped with isomorphisms \(K(x\otimes y)\cong K(x)\otimes K(y)\) and \(K(\mathbf{1})\cong\mathbf{1}\) satisfying coherence.  Precomposition with such a functor induces a strong monoidal functor on the Day convolution \(\Fun(\mathcal{A},\mathcal{V})\): the lax monoidal map
\[
K^{*}F \Day K^{*}G \;\longrightarrow\; K^{*}(F \Day G)
\]
is an isomorphism when \(K\) is strong monoidal \cite[Section~2.2.6]{LurieHA}.

\section{The centre comonad on a restricted site}
\label{sec:site}

Let \(\CAlgfd\) be the category whose objects are finite‑dimensional \(C^{*}\)-algebras and whose morphisms are \emph{centre‑preserving} unital \(*\)-homomorphisms: a \(*\)-homomorphism \(f : A \to B\) belongs to \(\CAlgfd\) if it satisfies \(f(Z(A)) \subseteq Z(B)\), where \(Z(A)\) denotes the centre of \(A\).  The direct sum \(A\oplus B\) (with component‑wise operations) is the categorical product in this category.  The \emph{minimal tensor product} \(\otimes\) makes \(\CAlgfd\) a symmetric monoidal category with unit \(\mathbb{C}\).

\begin{remark}
The condition \(f(Z(A))\subseteq Z(B)\) ensures that the restriction of \(f\) to the centre lands in the centre.  This subcategory contains all \(*\)-homomorphisms between commutative algebras, and all inclusions of \emph{central} commutative subalgebras (those contained in the centre of the larger algebra).  An inclusion of an arbitrary commutative subalgebra is not centre‑preserving in general; for instance, the diagonal matrices in \(M_2(\mathbb{C})\) form a commutative subalgebra but are not central.  However, it excludes the homomorphism \(\mathbb{C}\oplus\mathbb{C}\to M_2(\mathbb{C})\) sending \((1,0)\) to \(E_{11}\), which is the kind of map that breaks functoriality of the centre.
\end{remark}

The \emph{centre} functor
\[
\Center : \CAlgfd \longrightarrow \CAlgfd ,\qquad
\Center(A) = \{ a\in A \mid ab=ba \text{ for all } b\in A \},
\]
is now well defined: for a centre‑preserving \(f:A\to B\), its restriction \(\Center(f) : \Center(A) \to \Center(B)\) is again a unital \(*\)-homomorphism and is centre‑preserving.  The inclusion \(\varepsilon_A : \Center(A) \hookrightarrow A\) is natural, and \(\Center(\Center(A))=\Center(A)\); hence \(\Center\) is an idempotent comonad on \(\CAlgfd\).  It preserves finite products because \(\Center(A\oplus B)\cong \Center(A)\oplus \Center(B)\).  Moreover, \(\Center\) is a right adjoint (to the inclusion of commutative algebras), hence preserves all finite limits.

\begin{lemma}[Monoidality of the centre]\label{lem:center-monoidal}
For any \(A,B\in\CAlgfd\), the natural map
\[
\Center(A)\otimes \Center(B) \;\longrightarrow\; \Center(A\otimes B)
\]
induced by the inclusions is an isomorphism.  Furthermore, \(\Center(\mathbb{C})\cong\mathbb{C}\) via the unit map.
\end{lemma}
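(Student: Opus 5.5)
The plan is to reduce to matrix blocks using the Artin--Wedderburn structure theorem. Every finite-dimensional \(C^{*}\)-algebra is isomorphic to a finite direct sum \(A\cong\bigoplus_{i=1}^{m} M_{n_i}(\mathbb{C})\). Under such an isomorphism the centre is \(\Center(A)\cong\bigoplus_i \mathbb{C}\,1_{n_i}\cong\mathbb{C}^{m}\), spanned by the minimal central projections \(p_i\). The same holds for \(B\cong\bigoplus_{j=1}^{k} M_{r_j}(\mathbb{C})\), with central projections \(q_j\). In finite dimensions the minimal tensor product is the algebraic tensor product. It distributes over direct sums, and \(M_n(\mathbb{C})\otimes M_r(\mathbb{C})\cong M_{nr}(\mathbb{C})\) via the Kronecker product. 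Hence
\[
A\otimes B\;\cong\;\bigoplus_{i,j} M_{n_i r_j}(\mathbb{C}),
\qquad
\Center(A\otimes B)\cong\mathbb{C}^{mk},
\]
and its minimal central projections are exactly the elements \(p_i\otimes q_j\).

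The key steps, in order, are as follows.

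\begin{enumerate}[label=(\arabic*)]
\item Check that the map is well defined. If \(z\in \Center(A)\) and \(w\in\Center(B)\), then \(z\otimes w\) commutes with every elementary tensor \(a\otimes b\). Elementary tensors span \(A\otimes B\), so \(z\otimes w\) is central. Thus the multiplication map \(\Center(A)\otimes\Center(B)\to A\otimes B\), \(z\otimes w\mapsto z\otimes w\), lands in \(\Center(A\otimes B)\), and it is a unital \(*\)-homomorphism.
\item Prove injectivity. Since \(\mathbb{C}\) is a field, the tensor product of the subspace inclusions \(\Center(A)\hookrightarrow A\) and \(\Center(B)\hookrightarrow B\) is injective.
\item Prove surjectivity. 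The image contains every \(p_i\otimes q_j\), and by the block computation above these span \(\Center(A\otimes B)\).
\end{enumerate}

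Steps (2) and (3) can be replaced by a single dimension count, since both sides have dimension \(mk\) and the map is injective.

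For the unit statement, \(\mathbb{C}\) is commutative, so \(\Center(\mathbb{C})=\mathbb{C}\). The unit map \(\mathbb{C}\to\Center(\mathbb{C})\) is the identity.

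The main obstacle is the surjectivity step, that is, identifying \(\Center(M_n\otimes M_r)\) as exactly the scalars. I would handle it directly. An element of \(M_n\otimes M_r\cong M_{nr}\) that commutes with all of \(M_{nr}\) is scalar, by the usual matrix-unit argument.

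Alternatively, one can avoid Wedderburn coordinates entirely and use the commutant formula \((A\otimes 1)'\cap(1\otimes B)'=A'\otimes B'\) for the relative commutants. This follows by expanding an element as \(\sum_k a_k\otimes b_k\) with the \(b_k\) linearly independent and commuting it with \(a\otimes 1\), which forces each \(a_k\in\Center(A)\); the symmetric argument handles \(B\).

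I would present the Wedderburn route, since it also makes the naturality in \(A\) and \(B\) evident. Naturality holds because the map is induced by the inclusions \(\varepsilon_A\) and \(\varepsilon_B\), which the earlier discussion showed are natural. The isomorphisms are therefore compatible with centre-preserving morphisms, as needed for the strong monoidal structure on \(\Center\).
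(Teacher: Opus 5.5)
Your proposal is correct and follows the same route as the paper: decompose \(A\) and \(B\) by the Wedderburn structure theorem, identify the centres as \(\mathbb{C}^m\) and \(\mathbb{C}^k\), and read off \(A\otimes B\cong\bigoplus_{i,j}M_{n_i r_j}(\mathbb{C})\), whose centre is spanned by the block units \(p_i\otimes q_j\). Your explicit well-definedness, injectivity and spanning steps spell out what the paper compresses into the claim that \((\lambda_i),(\mu_j)\mapsto(\lambda_i\mu_j)\) induces a linear bijection \(\mathbb{C}^m\otimes\mathbb{C}^k\to\mathbb{C}^{mk}\).
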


\begin{proof}
By the structure theorem for finite‑dimensional \(C^{*}\)-algebras (see e.g.\ \cite[Theorem I.11.2]{Takesaki}), we can write
\[
A \;\cong\; \bigoplus_{i=1}^{m} M_{n_i}(\mathbb{C}),\qquad
B \;\cong\; \bigoplus_{j=1}^{n} M_{m_j}(\mathbb{C})
\]
as finite direct sums of matrix algebras.  The centre of a matrix algebra \(M_k(\mathbb{C})\) is the subalgebra of scalar matrices, isomorphic to \(\mathbb{C}\).  Hence \(\Center(A)\cong \mathbb{C}^m\) and \(\Center(B)\cong \mathbb{C}^n\).  The tensor product is
\[
A\otimes B \;\cong\; \bigoplus_{i,j} M_{n_i m_j}(\mathbb{C}),
\]
whose centre is \(\mathbb{C}^{mn}\).  The map sends the pair of tuples \((\lambda_1,\dots,\lambda_m)\) and \((\mu_1,\dots,\mu_n)\) to the \(mn\)-tuple with entries \(\lambda_i \mu_j\), which is a bijection, hence an isomorphism of commutative algebras.  The unit \(\mathbb{C}\) corresponds to the case \(m=n=1\), and the map \(\Center(\mathbb{C})\cong\mathbb{C}\to\Center(\mathbb{C}\otimes\mathbb{C})\cong\mathbb{C}\) is the identity. \qedhere
\end{proof}

Consequently, \(\Center\) is an idempotent symmetric monoidal comonad on \((\CAlgfd,\otimes,\mathbb{C})\), and it is left exact.

\begin{remark}[Finite‑dimensional exactness]
The perfect exactness of the centre functor is a special feature of the finite‑dimensional case.  For infinite‑dimensional \(C^{*}\)-algebras the centre is not left exact in general, so the present model sits safely within a context where all structural properties are guaranteed.
\end{remark}

\section{A cohesive \(\infty\)-topos with a quantum modality}
\label{sec:topos}

\subsection{The smooth cohesive base}
Let \(\Mfd\) be the essentially small category of smooth manifolds that are finite disjoint unions of open balls, equipped with the open cover topology.  The \(\infty\)-topos \(\mathbf{H}_{\mathrm{sm}} := \Shv_{\infty}(\Mfd)\) is the smooth cohesive \(\infty\)-topos \cite{Schreiber13} with modalities \(\Pi_{\mathrm{sm}}\dashv\flat_{\mathrm{sm}}\dashv\sharp_{\mathrm{sm}}\).

\subsection{The functor \(\infty\)-topos and pointwise cohesion}
Define the quantum \(\infty\)-topos
\[
\Htopos \;:=\; \Fun(\CAlgfd,\; \mathbf{H}_{\mathrm{sm}}).
\]
Cohesive modalities are lifted pointwise:
\[
\Pi(F)(A)=\Pi_{\mathrm{sm}}(F(A)),\quad
\flat(F)(A)=\flat_{\mathrm{sm}}(F(A)),\quad
\sharp(F)(A)=\sharp_{\mathrm{sm}}(F(A)).
\]

\begin{proposition}[Pointwise cohesion]\label{prop:cohesion}
The triple \((\Pi,\flat,\sharp)\) makes \(\Htopos\) a cohesive \(\infty\)-topos.
\end{proposition}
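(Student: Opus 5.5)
The plan is to verify the four clauses in the paper's definition of a cohesive \(\infty\)-topos for \(\Htopos\), deducing each from the corresponding clause for \(\mathbf{H}_{\mathrm{sm}}\).

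First, \(\Htopos\) is an \(\infty\)-topos. Since \(\CAlgfd\) is essentially small (finite-dimensional \(C^{*}\)-algebras are classified up to isomorphism by finite tuples \((n_1,\dots,n_m)\)), we have
\[
\Fun(\CAlgfd,\Shv_{\infty}(\Mfd)) \simeq \Shv_{\infty}(\CAlgfd^{\mathrm{op}}\times\Mfd),
\]
where the site on the right has trivial topology in the first factor and the open cover topology in the second. Equivalently, it is an accessible left exact localisation of \(\Fun((\CAlgfd^{\mathrm{op}}\times\Mfd)^{\mathrm{op}},\mathcal{S})\), obtained by applying sheafification objectwise. Alternatively, one can cite \cite{Lurie2009} for the fact that \(\Fun(\mathcal{C},\mathcal{X})\) is an \(\infty\)-topos for small \(\mathcal{C}\) and an \(\infty\)-topos \(\mathcal{X}\).

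Second, the adjunctions. For adjoint functors \(L\dashv R\) between \(\infty\)-categories, postcomposition gives an adjunction \(L_{*}\dashv R_{*}\) on \(\Fun(\mathcal{C},-)\). The units and counits are whiskered pointwise, and the triangle identities hold pointwise. Applying this to \(\Pi_{\mathrm{sm}}\dashv\flat_{\mathrm{sm}}\dashv\sharp_{\mathrm{sm}}\) yields \(\Pi\dashv\flat\dashv\sharp\). Here I should be careful. In Schreiber's formulation, these endofunctors are the composites of the adjoint quadruple \(\Pi_0\dashv \mathrm{Disc}\dashv\Gamma\dashv\mathrm{coDisc}\) between \(\mathbf{H}_{\mathrm{sm}}\) and \(\mathcal{S}\). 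I would therefore also lift that quadruple pointwise to \(\Fun(\CAlgfd,\mathcal{S})\), and check that the endofunctor triple it induces is exactly the pointwise one.

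Third, the remaining axioms are checked objectwise, since limits, products and equivalences in a functor category are computed pointwise. \(\Pi\) preserves finite products because \(\Pi(F\times G)(A)=\Pi_{\mathrm{sm}}(F(A)\times G(A))\simeq \Pi_{\mathrm{sm}}F(A)\times\Pi_{\mathrm{sm}}G(A)\). \(\flat\) is fully faithful because postcomposition with a fully faithful functor is fully faithful: mapping spaces in the functor category are ends of the pointwise mapping spaces, and the pointwise maps are equivalences. Finally, the unit \(\id\to\flat\Pi\) is the whiskering of the smooth unit. So it is an equivalence at \(F\) exactly when each component at \(F(A)\) is an equivalence. If \(F\simeq\flat G\), then every \(F(A)\) is discrete in \(\mathbf{H}_{\mathrm{sm}}\), so these components are equivalences.

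The main obstacle is reconciling the paper's endofunctor-style definition with the standard one, which requires a cohesive structure over \(\mathcal{S}\). Over \(\mathcal{S}\) the "discrete objects" of \(\Htopos\) are constant functors, whereas the pointwise \(\flat\) yields arbitrary \(\mathcal{S}\)-valued functors on \(\CAlgfd\). I would resolve this by regarding \(\Htopos\) as cohesive over the base \(\Fun(\CAlgfd,\mathcal{S})\), a presheaf topos. The quadruple lifts pointwise, and the axioms hold: \(\Pi_0\) preserves finite products, \(\mathrm{Disc}\) and \(\mathrm{coDisc}\) are fully faithful, and both properties follow pointwise as above. The statement with respect to the paper's stated definition then follows immediately.
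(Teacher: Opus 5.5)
Your proposal is correct and follows the same route as the paper, whose proof is just the observation that finite-product preservation, full faithfulness and the unit condition each hold pointwise in $\Fun(\CAlgfd,\mathbf{H}_{\mathrm{sm}})$. Your extra step of lifting the quadruple $\Pi_0\dashv\mathrm{Disc}\dashv\Gamma\dashv\mathrm{coDisc}$ and recognising that the pointwise triple is cohesion over the base $\Fun(\CAlgfd,\mathcal{S})$ rather than over $\mathcal{S}$ is a worthwhile sharpening that the paper omits: the endofunctor $\flat_{\mathrm{sm}}=\mathrm{Disc}\circ\Gamma$ is not itself fully faithful, so both the paper's ``$\Pi\flat\cong\id$ pointwise'' and your third paragraph are literally correct only when read as $\Pi_0\circ\mathrm{Disc}\simeq\id$, which is exactly how your final paragraph reads them.
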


\begin{proof}
Finite products are preserved pointwise; \(\flat\) is fully faithful because \(\Pi\flat\cong\id\) pointwise; the unit is an equivalence on discrete objects pointwise. \qedhere
\end{proof}

\subsection{The quantum modality and Beck--Chevalley}
Define the quantum modality by precomposition with the centre functor:
\[
\Qpot := \Center^{*} : \Htopos \longrightarrow \Htopos,\qquad (\Qpot F)(A)=F(\Center(A)).
\]

\begin{proposition}\label{prop:Qcomonad}
\(\Qpot\) is an idempotent comonad that preserves finite products and commutes with the cohesive modalities:
\[
\Pi\Qpot \cong \Qpot\Pi,\qquad \flat\Qpot \cong \Qpot\flat .
\]
\end{proposition}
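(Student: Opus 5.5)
The plan is to transport each property of the centre comonad \(\Center\) on \(\CAlgfd\) (established in Section~\ref{sec:site}) to the precomposition functor \(\Qpot=\Center^{*}\), using the fact that precomposition is contravariantly 2-functorial. The statement then reduces to formal facts about \(\Center\) together with the pointwise nature of everything in sight.

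\emph{Comonad structure.} Precomposition reverses the direction of natural transformations. So the counit \(\varepsilon:\Center\Rightarrow\id\) and the comultiplication \(\delta:\Center\Rightarrow\Center\Center\) of the comonad \(\Center\) induce
\[
\varepsilon^{*}:\id\Rightarrow\Center^{*},\qquad \delta^{*}:\Center^{*}\Center^{*}\Rightarrow\Center^{*}.
\]
This is a priori a \emph{monad} structure on \(\Center^{*}\). The first step is therefore to settle orientation. Since \(\Center\) is idempotent, \(\delta\) is an isomorphism, hence so is \(\delta^{*}\), and its inverse serves as a comultiplication. For a counit \(\Center^{*}\Rightarrow\id\), I would use that \(\Center\) is a coreflection. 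Writing \(\iota\) for the inclusion of the full subcategory of commutative algebras, we have \(\iota\dashv\Center\) (as asserted in Section~\ref{sec:site}), and \(\Center=\iota\Center\) is an idempotent comonad.

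Precomposition sends an adjunction \(L\dashv R\) to \(L^{*}\dashv R^{*}\) in the other direction, namely \(R^{*}\dashv L^{*}\)... In fact the cleanest route is to take the right Kan extension \(\Qact:=\Ran_{\Center}\), which gives \(\Center^{*}\dashv\Qact\). The counit \(\Center^{*}\Rightarrow\id\) would then be obtained as the mate of \(\varepsilon\). I expect exactly this step to be the main obstacle. If the orientation genuinely fails, \(\Qpot\) is an idempotent monad rather than a comonad, and one would have to reinterpret the claim, for instance via its right adjoint \(\Qact\), which is then a comonad. Idempotence itself is straightforward: \(\Center^{*}\Center^{*}=(\Center\Center)^{*}\simeq\Center^{*}\) because \(\Center\Center=\Center\).

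\emph{Products.} Finite products in \(\Htopos=\Fun(\CAlgfd,\mathbf{H}_{\mathrm{sm}})\) are computed pointwise. This gives
\[
(\Qpot(F\times G))(A)=F(\Center A)\times G(\Center A)=(\Qpot F\times\Qpot G)(A)
\]
on the nose, and \(\Qpot(\ast)=\ast\). In fact \(\Center^{*}\) preserves all limits and colimits, since these are pointwise too.

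\emph{Commutation with cohesion.} The modalities \(\Pi\) and \(\flat\) are postcomposition with \(\Pi_{\mathrm{sm}}\) and \(\flat_{\mathrm{sm}}\), while \(\Qpot\) is precomposition. Pre- and postcomposition commute strictly, so
\[
(\Pi\Qpot F)(A)=\Pi_{\mathrm{sm}}F(\Center A)=(\Qpot\Pi F)(A),
\]
naturally in \(A\) and \(F\), and similarly for \(\flat\) (and \(\sharp\)). The only care needed here is to check that these identifications are compatible with the (co)monad structure maps. This follows because the structure maps are whiskerings on the opposite side from the cohesive modalities, so the interchange law applies.
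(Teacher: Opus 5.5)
Your pointwise arguments for finite products and for $\Pi\Qpot\cong\Qpot\Pi$, $\flat\Qpot\cong\Qpot\flat$ are correct and coincide with the paper's proof. Your product argument, that limits in the functor category are pointwise, is in fact the more accurate justification. The gap is the first clause, that $\Qpot$ is a comonad. Your proposal leaves it unresolved and even suggests it may fail, and this comes from a variance error.

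The assertion that precomposition reverses the direction of natural transformations is false here. Objects of $\Htopos=\Fun(\CAlgfd,\mathbf{H}_{\mathrm{sm}})$ are \emph{covariant} functors $F$. Whiskering gives $F\varepsilon$, with components $F(\varepsilon_A)\colon F(\Center A)\to F(A)$. Hence $\varepsilon\colon\Center\Rightarrow\id$ induces a counit $\Qpot\Rightarrow\id$, and $\delta\colon\Center\Rightarrow\Center\Center$ induces a comultiplication $\Qpot\Rightarrow\Qpot\Qpot$. Both are natural in $F$ because morphisms $F\Rightarrow G$ are natural transformations. Precomposition reverses the order of composing $1$-cells but not the direction of $2$-cells, and the order reversal is invisible on $\Center\Center$. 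So the comonad laws for $(\Qpot,F\varepsilon,F\delta)$ are literally the whiskered comonad laws of $\Center$. Idempotence is the invertibility of $F\delta$, inherited from $\delta$ (here the identity). This is exactly what the paper means by ``follow from $\Center$''. The reversal you describe would occur for presheaves $\CAlgfd^{\mathrm{op}}\to\mathbf{H}_{\mathrm{sm}}$, not for the functors used here.

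Your proposed repair would not have worked either. A mate converts a $2$-cell between left adjoints into a $2$-cell between the corresponding right adjoints, pointing the opposite way. So with $\Center^{*}\dashv\Qact$, a transformation $\id\Rightarrow\Center^{*}$ only has a mate $\Qact\Rightarrow\id$; no mate calculus turns it into a counit $\Center^{*}\Rightarrow\id$. Your fallback, that $\Qact$ is then a comonad, is also backwards: as the right adjoint of an idempotent comonad, $\Qact$ is an idempotent monad.

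The adjunction route you abandoned actually settles the matter. Let $r\colon\CAlgfd\to\cCAlg$ be the corestriction of the centre, so that $\Center=\iota r$ with $\iota\dashv r$. Precomposition then gives $r^{*}\dashv\iota^{*}$, whose unit is invertible because $\iota$ is fully faithful. Therefore $\Qpot=(\iota r)^{*}=r^{*}\iota^{*}$ is a left adjoint composed after a right adjoint with invertible unit, that is, an idempotent comonad.
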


\begin{proof}
Idempotence and product preservation follow from \(\Center\).  Commutation:
\[
(\Pi\Qpot F)(A)=\Pi_{\mathrm{sm}}(F(\Center(A)))=(\Qpot\Pi F)(A).
\]
\qedhere
\end{proof}

Because the cohesive adjunctions act on the spatial coordinate and the quantum adjunction on the algebraic coordinate, they commute strictly.

\begin{remark}[Bimodal type theory]
The commuting pair \((\Pi,\flat,\sharp)\) and \(Q^{\diamond}\dashv Q_{\bullet}\) equips the internal language of \(\Htopos\) with a bimodal dependent type theory.  The Beck--Chevalley conditions guarantee that the two modalities commute, yielding a calculus where one can reason both about smooth structure and about decoherence independently.
\end{remark}

Because \(\Qpot\) is an accessible endofunctor on a presentable \(\infty\)-category, it admits a right adjoint \(\Qact\).  Explicitly, \(\Qact\) is given by the right Kan extension along \(\Center\):
\[
(\Qact F)(A) \;=\; \lim_{(B,\, A \to \Center(B))} F(B),
\]
where the limit is taken over the comma category of objects \(B\in\CAlgfd\) equipped with a morphism \(A \to \Center(B)\). This limit is finite because the category is small, and it computes the right Kan extension pointwise \cite{Lurie2009}.

\subsection{Connected geometric morphism to the classical core}
Since \(\Center\) is a right adjoint, it preserves finite limits, and therefore \(\Qpot\) is left exact.  An idempotent left exact comonad on a topos has a category of coalgebras that is again a topos, and the adjunction \(U \dashv \Qpot\) (where \(U:\HtoposQ\hookrightarrow \Htopos\) is the forgetful functor) is a geometric morphism \(\Htopos \to \HtoposQ\) with fully faithful inverse image \(U\); hence it is \emph{connected}.  The classical core \(\HtoposQ\) is thus a quotient topos of \(\Htopos\), and the quantum modality is the direct image projection onto that classical base.

Taking mates of the isomorphisms \(\Pi\Qpot\cong\Qpot\Pi\) and \(\flat\Qpot\cong\Qpot\flat\) under the adjunctions \(\Pi\dashv\flat\) and \(\flat\dashv\sharp\) (together with \(\Qpot\dashv\Qact\)) yields
\[
\flat \Qact \;\cong\; \Qact \flat ,\qquad \sharp \Qact \;\cong\; \Qact \sharp ,
\]
completing the full Beck--Chevalley package.  Moreover, the cohesive modalities descend to \(\HtoposQ\), making it a cohesive \(\infty\)-topos.

\section{Day convolution and strong monoidality}
\label{sec:Day}

The category \(\CAlgfd\) is symmetric monoidal under \(\otimes\).  By Lurie's Day convolution  \cite[Section~2.2.6]{LurieHA}, the functor \(\infty\)-topos \(\Htopos\) inherits a symmetric monoidal closed structure with tensor product
\[
(F \Day G)(A) \;=\; \int^{B,C\in\CAlgfd} \Hom_{\CAlgfd}(B\otimes C, A) \;\times\; F(B) \times G(C).
\]
The monoidal unit is \(\mathbf{1}_{\Day}(A) = \Hom_{\CAlgfd}(\mathbb{C},A) \times *_{\mathbf{H}_{\mathrm{sm}}}\).

Although the monoidal units of \(\Day\) and the pointwise cartesian product \(\times\) coincide (both are the constant functor on the terminal object of \(\mathbf{H}_{\mathrm{sm}}\)), the tensor products themselves are different: \(\Day\) combines values over different algebras and is not computed argumentwise.  A simple computation makes this concrete:

\begin{proposition}[Day convolution on representables]\label{prop:day-rep}
For any \(A,B\in\CAlgfd\) and smooth objects \(V,W\in\mathbf{H}_{\mathrm{sm}}\),
\[
(\mathbf{y}_A \times V) \Day (\mathbf{y}_B \times W) \;\cong\; \mathbf{y}_{A\otimes B} \times (V \times W),
\]
where \(\mathbf{y}_A = \Hom_{\CAlgfd}(A,-)\) denotes the representable functor.  In particular, \(\mathbf{y}_A \Day \mathbf{y}_B \cong \mathbf{y}_{A\otimes B}\).
\end{proposition}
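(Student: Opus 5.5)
The plan is a coend computation: substitute the representables into the Day convolution formula of Section~\ref{sec:Day} and collapse the coend twice with the co-Yoneda lemma. Here \(\mathbf{y}_A\times V\) is the functor \(A'\mapsto \Hom_{\CAlgfd}(A,A')\times V\), i.e.\ the discrete set \(\Hom(A,A')\) regarded as a coproduct of copies of \(V\) in \(\mathbf{H}_{\mathrm{sm}}\). Plugging this in gives
\[
\bigl((\mathbf{y}_A\times V)\Day(\mathbf{y}_B\times W)\bigr)(D)\;\cong\;\int^{B',C'} \Hom(B'\otimes C',D)\times \Hom(A,B')\times\Hom(B,C')\times V\times W .
\]

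The first step is to pull the constant factor \(V\times W\) out of the coend. Colimits in the \(\infty\)-topos \(\mathbf{H}_{\mathrm{sm}}\) are universal, so \(-\times(V\times W)\) preserves them, and the coend is a colimit. The remaining coend is then one of spaces (discrete sets) tensored with \(V\times W\).

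The second step evaluates that coend by the co-Yoneda (density) lemma, one variable at a time. For fixed \(C'\), the functor \(B'\mapsto \Hom(B'\otimes C',D)\) is contravariant in \(B'\). Coending it against the covariant representable \(\Hom(A,B')\) therefore gives \(\Hom(A\otimes C',D)\). Repeating the argument in \(C'\) against \(\Hom(B,C')\) gives \(\Hom(A\otimes B,D)=\mathbf{y}_{A\otimes B}(D)\). A Fubini step for coends justifies doing the two variables separately. Naturality in \(D\) is immediate, since every identification is natural. The special case \(\mathbf{y}_A\Day\mathbf{y}_B\cong\mathbf{y}_{A\otimes B}\) follows by taking \(V=W=*\).

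The main obstacle is variance. With the covariant convention \(\mathbf{y}_A=\Hom(A,-)\), the Day formula as written pairs \(\Hom(B'\otimes C',D)\) against \(F(B')\), and co-Yoneda matches variances correctly only if the tensor functor \(\otimes\) is used compatibly with the direction of the representables. For covariant functors the standard Day formula indeed uses \(\Hom(B'\otimes C',D)\), and the covariant Yoneda coend \(\int^{B'}\Hom(B',X)\times\Hom(A,B')\cong\Hom(A,X)\) applies. So the first thing to do is check that the paper's formula has this form, and then verify the density lemma in this variance. In the \(\infty\)-categorical setting this reduces to the standard fact that left Kan extension along \(\otimes\) of an external product of representables is representable by the tensor of the representing objects \cite[Section~2.2.6]{LurieHA}. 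I would cite that fact directly as a cleaner alternative to the hand computation.
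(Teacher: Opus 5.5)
Your proposal is correct and takes the same route as the paper: substitute the representables into the Day coend, then collapse the \(B'\) and \(C'\) variables in turn by the co-Yoneda lemma to reach \(\Hom(A\otimes B,D)\times V\times W\). It is more careful than the paper's proof, which only says the coends ``force'' \(B'\cong A\) and \(C'\cong B\); your use of universality of colimits to pull out \(V\times W\), the co-Yoneda lemma for the contravariant functor \(B'\mapsto\Hom(B'\otimes C',D)\), and the Fubini step are what make that collapse rigorous.
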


\begin{proof}
For any \(C\in\CAlgfd\),
\[
\begin{aligned}
\bigl((\mathbf{y}_A\times V) \Day (\mathbf{y}_B\times W)\bigr)(C)
&= \int^{B',C'\in\CAlgfd}
\Hom_{\CAlgfd}(B'\otimes C',\, C) \\
&\qquad \times \Hom_{\CAlgfd}(A, B') \times V
\times \Hom_{\CAlgfd}(B, C') \times W .
\end{aligned}
\]
By the Yoneda lemma, the coend over \(B'\) forces an isomorphism \(B'\cong A\), and the coend over \(C'\) forces \(C'\cong B\).  Consequently, the whole expression collapses to \(\Hom_{\CAlgfd}(A\otimes B,\, C) \times V \times W\), which is exactly \(\mathbf{y}_{A\otimes B}(C) \times (V \times W)\).  This isomorphism is natural in \(C\). \qedhere
\end{proof}

Thus the Day tensor ``bundles'' the smooth data over the tensor product algebra, exactly as one would expect for a composite quantum system.  The cartesian product, in contrast, keeps the two algebras separate.

\begin{theorem}\label{thm:Day}
\((\Htopos,\Day)\) is a symmetric monoidal closed \(\infty\)-topos.
\end{theorem}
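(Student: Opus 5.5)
The plan is to apply Lurie's Day convolution theorem \cite[Section~2.2.6]{LurieHA} directly, and then observe that nothing about the topos structure is affected by the choice of monoidal structure.

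\emph{Step 1 (the inputs are of the right kind).} I would first check the two hypotheses recalled in Section~\ref{sec:prelim}.
\begin{enumerate}[label=(\roman*)]
\item The source \(\CAlgfd\) must be a small symmetric monoidal \(\infty\)-category. We regard it as a \(1\)-category, hence an \(\infty\)-category via its nerve. It is essentially small, since every object is isomorphic to a finite direct sum of matrix algebras, so we may pass to a skeleton. It is symmetric monoidal under the minimal tensor product with unit \(\mathbb{C}\).
\item The target \(\mathbf{H}_{\mathrm{sm}}\) must be a presentable symmetric monoidal closed \(\infty\)-category. We use its cartesian structure. It is presentable because it is an \(\infty\)-topos. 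It is cartesian closed because every \(\infty\)-topos is: colimits are universal, so \(-\times X\) preserves colimits and the adjoint functor theorem supplies internal homs.
\end{enumerate}

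\emph{Step 2 (the point needing care).} The one thing I would check explicitly is that \(\otimes\) really restricts to the wide subcategory of centre-preserving maps. Let \(f:A\to A'\) and \(g:B\to B'\) be centre-preserving. By Lemma~\ref{lem:center-monoidal}, \(\Center(A\otimes B)=\Center(A)\otimes\Center(B)\). The map \(f\otimes g\) sends this into \(\Center(A')\otimes\Center(B')\), which is \(\Center(A'\otimes B')\) by the same lemma. So \(f\otimes g\) is centre-preserving.

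Next, the associators, unitors and symmetry isomorphisms are \(*\)-isomorphisms. Any \(*\)-isomorphism automatically preserves centres, so all the coherence data lie in \(\CAlgfd\). This is the main obstacle, and Lemma~\ref{lem:center-monoidal} is exactly what resolves it.

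\emph{Step 3 (conclude).} With the hypotheses in place, \cite[Section~2.2.6]{LurieHA} gives a symmetric monoidal structure on \(\Fun(\CAlgfd,\mathbf{H}_{\mathrm{sm}})\). Its tensor product is the coend formula displayed above, with unit \(\mathbf{1}_{\Day}\).

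For closedness, note that Day convolution is computed as a left Kan extension along \(\otimes\) of the external product \(F(B)\times G(C)\). That external product preserves colimits in each variable separately, because \(\mathbf{H}_{\mathrm{sm}}\) is cartesian closed. Left Kan extension preserves colimits, so \(F\Day -\) preserves colimits. Since \(\Htopos\) is presentable, the adjoint functor theorem then produces the internal hom. Concretely, I expect it to be given by the end
\[
[F,G](A)=\int_{B}\Map_{\mathbf{H}_{\mathrm{sm}}}\bigl(F(B),\,G(A\otimes B)\bigr),
\]
which I would take to be the internal mapping object of \(\mathbf{H}_{\mathrm{sm}}\).

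\emph{Step 4 (the topos part).} The topos structure is independent of the monoidal structure. \(\Htopos\) is a functor category from a small category into an \(\infty\)-topos, so it is itself an \(\infty\)-topos \cite{Lurie2009}. Therefore \((\Htopos,\Day)\) is an \(\infty\)-topos carrying a symmetric monoidal closed structure, which is the claim.
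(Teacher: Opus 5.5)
Your proposal is correct and takes the same route as the paper, whose entire proof is an appeal to Lurie's Day convolution \cite[Section~2.2.6]{LurieHA}. You also make explicit the points the paper leaves tacit, and each of these checks is sound: the essential smallness of \(\CAlgfd\); that \(\otimes\) restricts to centre-preserving maps, via Lemma~\ref{lem:center-monoidal}; closedness, via colimit preservation and the adjoint functor theorem; and that \(\Fun(\CAlgfd,\mathbf{H}_{\mathrm{sm}})\) is an \(\infty\)-topos.
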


\begin{proof}
This is a direct application of \cite[Section~2.2.6]{LurieHA}. \qedhere
\end{proof}

\begin{proposition}[Strong monoidality of \(\Qpot\)]\label{prop:strong}
\(\Qpot\) is a symmetric monoidal comonad on \((\Htopos,\Day)\); i.e., there are natural isomorphisms
\[
\Qpot(F \Day G) \;\cong\; \Qpot F \Day \Qpot G,\qquad \Qpot(\mathbf{1}_{\Day}) \cong \mathbf{1}_{\Day},
\]
compatible with the comonad structure.
\end{proposition}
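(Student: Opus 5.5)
The plan is to deduce the statement from two facts already on the table: Lemma~\ref{lem:center-monoidal}, which makes \(\Center\) a strong symmetric monoidal endofunctor of \((\CAlgfd,\otimes,\mathbb{C})\), and the principle recalled in Section~\ref{sec:prelim} that precomposition \(K^{*}\) with a strong monoidal \(K\) is strong monoidal for Day convolution. Concretely, I would carry out three steps in order.

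\textbf{Step 1: the lax structure.} Recall that \(\Qpot=\Center^{*}\). Since \(\Center^{*}\) is right adjoint to left Kan extension \(\Center_{!}\), and \(\Center_{!}\) is strong monoidal for \(\Day\) whenever \(\Center\) is, the mate construction supplies a lax monoidal structure
\[
\Qpot F\Day\Qpot G\longrightarrow\Qpot(F\Day G).
\]
On coends, this map sends a component indexed by \(B,C\) and a morphism \(B\otimes C\to A\) to the component indexed by \(\Center B,\Center C\). It uses \(\Center(B)\otimes\Center(C)\cong\Center(B\otimes C)\to\Center(A)\), which is available by Lemma~\ref{lem:center-monoidal}.

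\textbf{Step 2: the unit, checked by hand.} We have \((\Qpot\mathbf{1}_{\Day})(A)=\Hom_{\CAlgfd}(\mathbb{C},\Center A)\times *\). Both \(\Hom(\mathbb{C},\Center A)\) and \(\Hom(\mathbb{C},A)\) are singletons, because a unital \(*\)-homomorphism out of \(\mathbb{C}\) is unique and is automatically centre-preserving. Hence \(\Qpot\mathbf{1}_{\Day}\cong\mathbf{1}_{\Day}\).

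\textbf{Step 3: compatibility with the comonad structure.} The counit \(\varepsilon:\Center\Rightarrow\id\) consists of the inclusions \(\Center(A)\hookrightarrow A\), and these are monoidal: the inclusion \(\Center A\otimes\Center B\hookrightarrow A\otimes B\) is exactly the map of Lemma~\ref{lem:center-monoidal}. The comultiplication is the identity \(\Center\cong\Center\Center\). It follows that \(\varepsilon^{*}\) and the comultiplication of \(\Qpot\) are monoidal natural transformations, and symmetry is inherited from the symmetry of \(\otimes\).

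\textbf{Main obstacle: invertibility of the lax map in Step 1.} The citation in Section~\ref{sec:prelim} asserts this directly, but I would verify it independently rather than rely on it, because in general only \(K_{!}\), and not \(K^{*}\), is strong. To do this, I would first reduce to generators. Both sides preserve colimits separately in \(F\) and in \(G\): \(\Center^{*}\) preserves colimits because they are computed pointwise, and \(\Day\) is cocontinuous in each variable. It therefore suffices to treat \(F=\mathbf{y}_A\times V\) and \(G=\mathbf{y}_B\times W\). By Proposition~\ref{prop:day-rep}, the right-hand side becomes \(\Hom(A\otimes B,\Center(-))\times V\times W\), so the task is to compare this with \(\Hom(A,\Center(-))\Day\Hom(B,\Center(-))\).

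For this comparison I would first try the following. A centre-preserving map \(A\to\Center C\) lands in a commutative algebra, and so factors through the largest commutative quotient of \(A\). Combined with the matrix-block description used in the proof of Lemma~\ref{lem:center-monoidal}, this should identify \(\Center^{*}\mathbf{y}_A\) with a representable on a commutative algebra, after which Proposition~\ref{prop:day-rep} applies again. I expect the delicate point to be exactly whether this factorization is compatible with \(\otimes\) and with the centre-preserving restriction on morphisms. If that fails, the fallback is to establish the statement only as a lax monoidal comonad that becomes strong on the commutative objects.
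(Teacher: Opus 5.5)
Your route is sound and differs from the paper's. The paper's proof is a one-line appeal to the principle, also recorded in Section~\ref{sec:prelim}, that precomposition with a strong monoidal functor is strong monoidal for Day convolution. It then handles the unit by writing \(\Qpot\mathbf{y}_{\mathbb{C}}\cong\mathbf{y}_{\Center(\mathbb{C})}\cong\mathbf{y}_{\mathbb{C}}\). Your suspicion of that principle is justified: \(K^{*}\) is right adjoint to the strong monoidal \(K_{!}\) and is in general only lax. Likewise \(\Center^{*}\mathbf{y}_{A}=\Hom_{\CAlgfd}(A,\Center(-))\), which is not \(\mathbf{y}_{\Center(A)}\). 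The unit claim holds only because \(\mathbb{C}\) is initial in \(\CAlgfd\), and that is exactly what your Step~2 checks. So the paper's argument buys brevity, while yours isolates the special property of \(\Center\) that actually makes the lax map invertible.

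The step you flag as delicate does go through, so the fallback is not needed.
\begin{itemize}
\item Write \(A\cong\bigoplus_i M_{n_i}(\mathbb{C})\), let \(A^{\mathrm{ab}}\) be the sum of the blocks with \(n_i=1\), and let \(p_A:A\to A^{\mathrm{ab}}\) be the projection.
\item Every \(*\)-homomorphism into a commutative algebra is automatically centre-preserving, so \(p_A\) and all maps \(A\to\Center(C)\) lie in \(\CAlgfd\).
\item Each such map kills the blocks with \(n_i\ge 2\), so it factors uniquely through \(p_A\).
\item A map out of the commutative algebra \(A^{\mathrm{ab}}\) is centre-preserving exactly when it lands in \(\Center(C)\).
\end{itemize}
Hence \(\Center^{*}\mathbf{y}_{A}\cong\mathbf{y}_{A^{\mathrm{ab}}}\), naturally in \(A\) and \(C\). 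Equivalently, \(A\mapsto A^{\mathrm{ab}}\) is left adjoint to \(\Center\) on \(\CAlgfd\).

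For \(\otimes\): a block \(M_{n_i m_j}(\mathbb{C})\) of \(A\otimes B\) is one-dimensional iff \(n_i=m_j=1\). So \((A\otimes B)^{\mathrm{ab}}\cong A^{\mathrm{ab}}\otimes B^{\mathrm{ab}}\), with \(p_{A\otimes B}\) corresponding to \(p_A\otimes p_B\). By Proposition~\ref{prop:day-rep}, both sides on your generators become \(\mathbf{y}_{A^{\mathrm{ab}}\otimes B^{\mathrm{ab}}}\times V\times W\). Plugging the representative \((h,p_A,p_B)\) into your Step~1 coend formula shows that the lax map is \(h\mapsto h\circ(p_A\otimes p_B)\), which is a bijection.

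Conceptually, \(\Qpot=\Center^{*}\) is left Kan extension along the strong monoidal functor \(A\mapsto A^{\mathrm{ab}}\), and that is why it is strong. One edge case: if \(A\) has no one-dimensional block, then \(A^{\mathrm{ab}}\) is the zero algebra when that is an object of the site, and otherwise \(\Center^{*}\mathbf{y}_{A}\) is the initial object. Since \(\Day\) is cocontinuous in each variable, the comparison still holds in both cases.
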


\begin{proof}
The functor \(\Center\) is strong monoidal by Lemma~\ref{lem:center-monoidal}.  Precomposition with a strong monoidal functor between small symmetric monoidal categories induces a strong monoidal functor on the Day convolution \cite[Section~2.2.6]{LurieHA}.  Hence \(\Center^{*}=\Qpot\) is strong monoidal.  For the unit, note that \(\mathbf{1}_{\Day} = \mathbf{y}_{\mathbb{C}}\) and \(\Center(\mathbb{C})\cong\mathbb{C}\); thus \(\Qpot(\mathbf{1}_{\Day}) \cong \mathbf{y}_{\Center(\mathbb{C})} \cong \mathbf{y}_{\mathbb{C}} = \mathbf{1}_{\Day}\), which is compatible with the comonad structure. \qedhere
\end{proof}

Since \(\Qpot\) is the direct image of the geometric morphism \(\Htopos \to \HtoposQ\) and is strong monoidal for \(\Day\), this geometric morphism is a \emph{monoidal geometric morphism} between symmetric monoidal closed \(\infty\)-topoi.

\section{The linear logic structure}
\label{sec:linear-logic}

Let \(\HtoposQ\) be the full subcategory of \(\Qpot\)-coalgebras.  Because \(\Qpot\) is idempotent, \(\HtoposQ\) is equivalent to the Kleisli category and is a coreflective subcategory of \(\Htopos\).  The forgetful functor \(U:\HtoposQ\hookrightarrow \Htopos\) has a right adjoint given by \(Q^{\diamond}\) itself.  Finite products in \(\HtoposQ\) are created in \(\Htopos\) and preserved by \(\Qpot\).

One may attempt to define a symmetric monoidal structure on \(\HtoposQ\) by
\[
X \otimes_{Q} Y \;:=\; \Qpot( X \times Y ),
\]
with unit \(I_{Q} := \Qpot(\mathbf{1}_{\times})\).  Since \(\Qpot\) is idempotent and product‑preserving, we have
\[
X \otimes_{Q} Y \;\cong\; \Qpot X \times \Qpot Y \;\cong\; X \times Y ,
\]
and \(I_{Q} \cong \mathbf{1}_{\times}\).  Thus \(\otimes_{Q}\) coincides with the cartesian product, and the multiplicative and additive conjunctions collapse.  This degeneracy is an artefact of the cartesian fragment and does not affect the multiplicative (affine) structure derived from Day convolution.

\begin{remark}[Triviality of the Seely isomorphism]\label{rem:seely-trivial}
The degeneracy of the cartesian model is a consequence of the fact that
the Seely isomorphism \(Q^{\diamond}(X\times Y) \cong Q^{\diamond}X \Day Q^{\diamond}Y\)
holds trivially.  The classical core \(\HtoposQ\) is equivalent to
\(\Fun(\FinSet^{\mathrm{op}},\mathbf{H}_{\mathrm{sm}})\), and the Day
convolution on this topos is the cartesian product.  Because
\(Q^{\diamond}\) preserves finite products, both sides of the Seely
isomorphism reduce to the cartesian product of the underlying presheaves,
so the isomorphism is an identity.  This forces the multiplicative
conjunction of linear logic to coincide with the additive conjunction,
collapsing the resource‑sensitive structure.
\end{remark}

The category of coalgebras also carries the symmetric monoidal structure
inherited from the Day convolution on \(\Htopos\).  Because \(\Qpot\) is
a strong monoidal comonad, the coalgebra category naturally acquires a
symmetric monoidal structure via \(X \otimes_{\Day_Q} Y = X \Day Y\).
Explicitly:

\begin{proposition}[Day convolution on coalgebras]\label{prop:day-coalg}
The full subcategory \(\HtoposQ\) carries a symmetric monoidal structure \(\Day_Q\) defined by
\[
(X,\alpha)\;\Day_Q\;(Y,\beta) \;:=\; \bigl(X\Day Y,\; \mu_{X,Y}\circ (\alpha\Day\beta)\bigr),
\]
where \(\mu_{X,Y}: \Qpot X \Day \Qpot Y \to \Qpot(X\Day Y)\) is the isomorphism provided by the strong monoidality of \(\Qpot\).  The monoidal unit is \((\mathbf{1}_{\Day}, \mathbf{1}_{\Day}\to \Qpot\mathbf{1}_{\Day})\).  The forgetful functor \(U:\HtoposQ\to\Htopos\) is strong monoidal with respect to this structure.
\end{proposition}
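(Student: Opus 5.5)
The plan is to obtain the structure from the general fact that the Eilenberg--Moore category of a (symmetric) monoidal comonad carries a monoidal structure lifting that of the base. We use Proposition~\ref{prop:strong}, which makes \(\Qpot\) a symmetric monoidal comonad on \((\Htopos,\Day)\), and then specialise to the idempotent case.

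First, check that the formula defines a coalgebra. Given coalgebras \((X,\alpha)\) and \((Y,\beta)\), the candidate structure map \(\gamma=\mu_{X,Y}\circ(\alpha\Day\beta)\colon X\Day Y\to\Qpot(X\Day Y)\) must satisfy two axioms.
\begin{itemize}
\item The counit axiom \(\varepsilon_{X\Day Y}\circ\gamma=\id\) reduces to \(\varepsilon_X\alpha\Day\varepsilon_Y\beta=\id\). This uses the comonoidality of the counit: \(\varepsilon_{X\Day Y}\circ\mu_{X,Y}=\varepsilon_X\Day\varepsilon_Y\).
\item The coassociativity axiom \(\delta_{X\Day Y}\circ\gamma=\Qpot\gamma\circ\gamma\) follows from \(\delta\alpha=\Qpot\alpha\,\alpha\), the same identity for \(\beta\), naturality of \(\mu\), and comonoidality of \(\delta\): \(\delta_{X\Day Y}\mu_{X,Y}=\Qpot\mu_{X,Y}\circ\mu_{\Qpot X,\Qpot Y}\circ(\delta_X\Day\delta_Y)\).
\end{itemize}
For the unit, take the structure map to be the inverse of the unit isomorphism \(\Qpot\mathbf{1}_{\Day}\cong\mathbf{1}_{\Day}\) from Proposition~\ref{prop:strong}. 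Its axioms follow from the unit coherences of a monoidal comonad.

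Next, transport the monoidal structure. Coalgebra morphisms are closed under \(\Day\), by naturality of \(\mu\), so \(\Day_Q\) is a functor on \(\HtoposQ\). The associator, unitors and symmetry of \(\Day\) on \(\Htopos\) are coalgebra morphisms, because \(\mu\) is compatible with the associativity, unit and symmetry constraints; this is exactly the content of ``symmetric monoidal functor''. The pentagon and hexagon identities then hold in \(\HtoposQ\) because they hold in \(\Htopos\) and \(U\) is faithful. By construction \(U\) preserves \(\Day_Q\) and its constraints on the nose, so it is strict, hence strong, monoidal.

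Idempotence gives a shortcut that I would present alongside the above. Since \(\HtoposQ\) is the full coreflective subcategory of objects \(X\) with \(\varepsilon_X\) an equivalence, it suffices to show that this class is closed under \(\Day\) and contains \(\mathbf{1}_{\Day}\). If \(\varepsilon_X\) and \(\varepsilon_Y\) are equivalences, then \(\varepsilon_{X\Day Y}\circ\mu_{X,Y}=\varepsilon_X\Day\varepsilon_Y\) is an equivalence. Because \(\mu_{X,Y}\) is an equivalence, so is \(\varepsilon_{X\Day Y}\). A full subcategory closed under the tensor and containing the unit inherits the symmetric monoidal structure, and this gives the \(\infty\)-categorical coherence for free via suboperads in \cite[Chapter~2]{LurieHA}. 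The coalgebra structure is then essentially unique and agrees with \(\mu\circ(\alpha\Day\beta)\).

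The main obstacle is the comonoidality of \(\varepsilon\) and \(\delta\), that is, that the comonad structure maps are monoidal natural transformations. These were only asserted as ``compatible with the comonad structure'' in Proposition~\ref{prop:strong}. I would verify this at the level of the site: the inclusion \(\Center(A)\otimes\Center(B)\to A\otimes B\) factors as the isomorphism of Lemma~\ref{lem:center-monoidal} followed by the inclusion \(\Center(A\otimes B)\hookrightarrow A\otimes B\), and \(\Center\Center=\Center\). Hence \(\varepsilon\) and \(\delta\) on \(\CAlgfd\) are monoidal transformations, and precomposition \((-)^{*}\) transfers monoidal transformations to the Day convolution.
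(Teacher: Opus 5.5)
Your proposal is correct and takes the paper's route: the paper simply invokes the general fact that coalgebras of a symmetric monoidal comonad inherit its monoidal structure with strict monoidal forgetful functor. You additionally spell out the counit and coassociativity checks, and you verify that \(\varepsilon\) and \(\delta\) are monoidal via the site-level factorisation of \(\Center(A)\otimes\Center(B)\to A\otimes B\); the paper only asserts this. Your supplementary idempotent argument, closing the coreflective full subcategory under \(\Day\) and using Lurie's monoidal full subcategories, is worth keeping, since it supplies the \(\infty\)-categorical coherence that the paper's 1-categorical citation leaves implicit.
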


\begin{proof}
For any symmetric monoidal comonad, the category of coalgebras inherits a symmetric monoidal structure such that the forgetful functor is strong monoidal; see e.g.\ \cite[Proposition~1.4]{Benton1995} for the dual statement on monads.  The idempotence of \(\Qpot\) simplifies the structure maps. \qedhere
\end{proof}

The closed structure of \(\Day\) also descends to \(\Day_Q\).  Because \(\Qpot\) is strong monoidal and the adjunction \(U \dashv \Qpot\) is symmetric monoidal, for any coalgebras \(X,Y\) the internal hom \([UX, UY]_{\Day}\) in \(\Htopos\) carries a canonical coalgebra structure.  Concretely, the mate of the composite
\[
\Qpot[UX, UY] \xrightarrow{\;\sim\;} [\Qpot UX, \Qpot UY] \cong [X, Y]
\]
under the adjunction \(U \dashv \Qpot\) provides the required coalgebra structure, and one verifies that it defines the internal hom for \(\Day_Q\).  Hence \((\HtoposQ,\Day_Q)\) is symmetric monoidal closed.  However, as shown in Proposition~\ref{prop:cartesian-seely} and Remark~\ref{rem:cartesian-core}, this tensor product is cartesian; consequently the multiplicative conjunction collapses to the additive conjunction and the linear‑logic structure degenerates.

\begin{remark}[Cartesian Day convolution on the classical core]\label{rem:cartesian-core}
The Day convolution unit in \(\Htopos\) is the constant terminal object,
and the same holds for \(\HtoposQ\).  However, the tensor product
\(\Day_Q\) on the classical core is in fact the cartesian product
(Proposition~\ref{prop:cartesian-seely}).  Hence \((\HtoposQ,\Day_Q)\) is
a cartesian monoidal category, and both weakening and contraction are
admissible.  The logic of the classical core is therefore ordinary
classical logic, not an affine resource‑sensitive one.
\end{remark}

\section{Discrete classical field theories}
\label{sec:TQFT}

The fixed points of the centre comonad are precisely the commutative finite‑dimensional \(C^{*}\)-algebras.  By Gelfand duality, the full subcategory \(\cCAlg\) of commutative algebras is equivalent to \(\FinSet^{\mathrm{op}}\).  Consequently,
\[
\HtoposQ \;\simeq\; \Fun(\cCAlg,\mathbf{H}_{\mathrm{sm}}) \;\simeq\; \Fun(\FinSet^{\mathrm{op}},\mathbf{H}_{\mathrm{sm}}).
\]

\begin{proposition}[Cartesian Day convolution on the classical core]
\label{prop:cartesian-seely}
Under the equivalence \(\HtoposQ \simeq \Fun(\FinSet^{\mathrm{op}},\mathbf{H}_{\mathrm{sm}})\),
the Day convolution \(\Day_Q\) corresponds to the cartesian product of
presheaves on \(\FinSet^{\mathrm{op}}\).
\end{proposition}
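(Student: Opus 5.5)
The plan is to factor the statement into three identifications and compose them: (i) coalgebras are left Kan extensions from the commutative core; (ii) left Kan extension along the inclusion of commutative algebras is strong monoidal for Day convolution; (iii) Day convolution on presheaves on a cartesian monoidal category is the pointwise product.

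\textbf{Step (i): coalgebras as left Kan extensions.} Write \(i:\cCAlg\hookrightarrow\CAlgfd\) for the inclusion. As recalled in Section~\ref{sec:site}, \(\Center\) (with corestricted codomain) is right adjoint to \(i\). For any \(F\in\Fun(\cCAlg,\mathbf{H}_{\mathrm{sm}})\) I would compute \(\mathrm{Lan}_i F\) pointwise:
\[
(\mathrm{Lan}_i F)(A)\simeq\operatorname{colim}_{(c,\,i c\to A)}F(c).
\]
The indexing category has the terminal object \(\varepsilon_A:\Center(A)\hookrightarrow A\), so the colimit is \(F(\Center(A))\). Hence \(\mathrm{Lan}_i\simeq \Center^{*}\) on functors out of \(\cCAlg\). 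Since \(\Center\) is idempotent, the coalgebras in \(\HtoposQ\) are exactly the functors of the form \(F\circ\Center\). This gives \(\HtoposQ\simeq\Fun(\cCAlg,\mathbf{H}_{\mathrm{sm}})\), with \(\mathrm{Lan}_i\) serving as the inclusion.

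\textbf{Step (ii): \(\mathrm{Lan}_i\) is strong monoidal.} The functor \(i\) is strong monoidal: the tensor product of commutative algebras is commutative, and \(\mathbb{C}\) is the unit. Left Kan extension along a strong monoidal functor is strong monoidal for Day convolution (cf.\ \cite[Section~2.2.6]{LurieHA}). A concrete way to see this is that \(\mathrm{Lan}_i\) preserves colimits and sends representables to representables, \(\mathrm{Lan}_i\mathbf{y}_c\simeq\mathbf{y}_{ic}\); one then invokes Proposition~\ref{prop:day-rep} together with \(ic\otimes ic'\cong i(c\otimes c')\). Consequently \(X\Day Y\) of two coalgebras is again a coalgebra, and under Step (i) it corresponds to the Day convolution computed on \(\Fun(\cCAlg,\mathbf{H}_{\mathrm{sm}})\) with respect to \(\otimes\). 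This identifies \(\Day_Q\) of Proposition~\ref{prop:day-coalg} with that Day convolution.

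\textbf{Step (iii): reduction to the cartesian case.} Gelfand duality gives \(\cCAlg\simeq\FinSet^{\mathrm{op}}\), under which \(\mathbb{C}^S\otimes\mathbb{C}^T\cong\mathbb{C}^{S\times T}\) (this is the computation in Lemma~\ref{lem:center-monoidal}). So we are looking at presheaves on \(\FinSet\), where \(\FinSet\) carries its cartesian product. For such presheaves \(F,G\) and \(S\in\FinSet\) I would compute
\[
(F\Day G)(S)\simeq\int^{T,T'}\Hom_{\FinSet}(S,T\times T')\times F(T)\times G(T').
\]
Using \(\Hom(S,T\times T')\cong\Hom(S,T)\times\Hom(S,T')\), the coend splits as a product of two coends. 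Each of these collapses by the co-Yoneda lemma, giving \(F(S)\times G(S)\). The unit is \(\mathbf{y}_{\ast}\), which is terminal, consistent with the units coinciding.

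\textbf{Main obstacle.} I expect the hardest point to be making the splitting of the coend rigorous in the \(\infty\)-categorical setting. This needs the product in \(\mathbf{H}_{\mathrm{sm}}\) to commute with the colimits in each variable separately, which holds because \(\mathbf{H}_{\mathrm{sm}}\) is an \(\infty\)-topos, so products are universal. It also needs the co-Yoneda collapse to apply to each factor. I would sidestep most of this by checking the formula on representables first: \(\mathbf{y}_T\Day\mathbf{y}_{T'}\simeq\mathbf{y}_{T\times T'}\simeq\mathbf{y}_T\times\mathbf{y}_{T'}\) by Proposition~\ref{prop:day-rep} and the universal property of the product. I would then extend to all presheaves, since both \(\Day\) and the pointwise product (again by universality of colimits) preserve colimits separately in each variable.
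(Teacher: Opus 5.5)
Your proposal is correct and follows the same route as the paper. Both arguments identify the coalgebras with \(\Fun(\cCAlg,\mathbf{H}_{\mathrm{sm}})\), use Gelfand duality to turn the tensor product of commutative algebras into the product of finite sets, and conclude via the standard fact that Day convolution over a cartesian monoidal base is the pointwise product. Your Step (ii), the strong monoidality of \(\mathrm{Lan}_i\simeq\Center^{*}\), makes explicit the transfer of \(\Day_Q\) to Day convolution on \(\Fun(\cCAlg,\mathbf{H}_{\mathrm{sm}})\), which the paper's proof takes for granted, and your coend computation replaces the paper's citation to Day.
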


\begin{proof}
The Day convolution on \(\Fun(\cCAlg,\mathbf{H}_{\mathrm{sm}})\) is induced
by the tensor product of commutative algebras.  Under Gelfand duality,
the tensor product of commutative algebras corresponds to the cartesian
product of finite sets.  The Day convolution on
\(\Fun(\FinSet^{\mathrm{op}},\mathbf{H}_{\mathrm{sm}})\) induced by the
product of finite sets is the pointwise cartesian product of presheaves
(a standard fact; see e.g.\ \cite[Example~4.2]{Day70}).  Hence, via the
equivalence \(\Fun(\cCAlg,\mathbf{H}_{\mathrm{sm}}) \simeq \Fun(\FinSet^{\mathrm{op}},\mathbf{H}_{\mathrm{sm}})\),
the Day convolution on the classical core coincides with the cartesian
product.
\end{proof}

The category \(\Fun(\FinSet^{\mathrm{op}},\mathbf{H}_{\mathrm{sm}})\) is the topos of presheaves on finite sets valued in the smooth \(\infty\)-topos.  A presheaf \(Z:\FinSet^{\mathrm{op}}\to\mathbf{H}_{\mathrm{sm}}\) assigns to each finite set \(S\) a smooth object \(Z(S)\) (the state space on the discrete spacetime \(S\)) and to each function \(f:S\to T\) a restriction map \(Z(f):Z(T)\to Z(S)\) (deterministic coarse‑graining).  This is exactly a functorial field theory with discrete spacetime — a \emph{discrete classical field theory}.  The Day convolution \(\Day_Q\) models the composition of such theories via the cartesian product of the underlying sets (tensor product of commutative algebras).

The comonad \(\Qpot\) sends a field theory over a non‑commutative algebra to its classical limit over the centre — the operation of decoherence.

\subsection{Comparison with Bohrification}
\label{sec:bohrification}

The topos used in the Bohrification programme \cite{HeunenLandsmanSpitters2009} is the
presheaf topos on the poset of commutative subalgebras of a fixed \(C^{*}\)-algebra.  Our
topos \(\Htopos\) is the presheaf topos on the category of all finite‑dimensional
\(C^{*}\)-algebras.  The key differences are:
\begin{itemize}
\item \emph{Site}: The Bohrification site is the poset of commutative subalgebras of a
single algebra, whereas our site is the whole category of finite‑dimensional \(C^{*}\)-algebras.
This allows us to treat varying algebras and their morphisms uniformly.
\item \emph{Quantum modality}: In Bohrification, the internal Boolean algebra of central
idempotents gives a modality, but it is internal to the presheaf topos.  Our quantum modality
is a globally defined functor on the whole topos, precomposition with the centre, and is
independent of a chosen algebra.
\item \emph{Classical core}: The classical core in Bohrification is the topos of presheaves on
the Boolean algebra of central idempotents, which is equivalent to the category of sheaves on
the Gelfand spectrum.  In our model, the classical core is the larger presheaf topos on all
commutative algebras (equivalently, on finite sets).  The former is equivalent to the topos of presheaves on the Gelfand spectrum
of the chosen algebra (a finite set), which is a full subcategory of the
presheaf topos on all finite sets.  Hence our classical core provides a
uniform framework that globally internalises and generalises the localised
structural objectives of the Bohrification core.
\end{itemize}

Thus the present model encompasses and generalises the Bohrification approach, providing a
flexible framework for synthetic quantum mechanics.

\section{A concrete example}
\label{sec:qubit}

Consider the qubit algebra \(M_2(\mathbb{C})\).  Since \(M_2\) is simple and non‑commutative, there are no centre‑preserving \(*\)-homomorphisms from \(M_2\) into any commutative algebra.  Hence for any \(B\in\CAlgfd\),
\[
\Qpot(\mathbf{y}_{M_2})(B) = \Hom_{\CAlgfd}(M_2,\Center(B)) = \varnothing,
\]
so \(\Qpot(\mathbf{y}_{M_2})\) is the empty presheaf (the initial object of the topos).  The classical shadow of the qubit is trivial under the centre modality; the centre is too coarse to capture the classical bit.

Nevertheless, the Day convolution and the cartesian product differ:
\[
\mathbf{y}_{M_2}\Day\mathbf{y}_{M_2} \cong \mathbf{y}_{M_4},
\qquad
\mathbf{y}_{M_2}\times\mathbf{y}_{M_2} \cong \mathbf{y}_{M_2\oplus M_2},
\]
showing the contrast between entanglement (tensor product) and independence (direct sum).  This distinction remains meaningful.

\section{Conjectures and future directions}
\label{sec:conjectures}

We formulate three precise conjectures that build on the results of this paper and point towards a fully satisfactory cohesive quantum \(\infty\)-topos.  

\begin{conjecture}[Classification of quantum modalities on diagram \(\infty\)-topoi]\label{conj:class}
Let \(\mathcal{C}\) be a small \(\infty\)-category and \(\mathbf{H}\) a cohesive \(\infty\)-topos.  Suppose \(Q\) is an accessible, idempotent, product‑preserving comonad on \(\Fun(\mathcal{C},\mathbf{H})\) that commutes with the pointwise cohesive modalities and restricts to the identity on the full subcategory of constant discrete objects.  Then there exists a coreflective full subcategory \(\mathcal{D}\hookrightarrow\mathcal{C}\) such that \(Q \cong r^{*}\) for the coreflector \(r\), and the coalgebra category is \(\Fun(\mathcal{D},\mathbf{H})\).
\end{conjecture}

\begin{conjecture}[Refined embedding of quantum channels]\label{conj:channels}
The centre comonad is insufficient to faithfully embed CPTP maps.  There exists a site \(\mathcal{S}\) (e.g., algebras with a completely positive structure, or Hilbert bimodules) and an idempotent strong monoidal comonad \(K\) on \(\mathcal{S}\) whose Kleisli category contains \(\mathbf{CPTP}\) as a full subcategory, with Day convolution modelling the tensor product of channels, and whose coalgebras are the commutative algebras.
\end{conjecture}

\begin{conjecture}[Derived centre model]\label{conj:derived}
There exists an \(\infty\)-category \(\mathcal{C}\) of ``derived finite‑dimensional \(C^{*}\)-algebras'' and a derived centre functor \(\mathbb{R}\mathrm{Center}\) that is an idempotent strong monoidal comonad satisfying the Seely isomorphism
\[
\mathbb{R}\mathrm{Center}(A\times B) \;\cong\; \mathbb{R}\mathrm{Center}(A) \otimes^{\mathbb{L}} \mathbb{R}\mathrm{Center}(B),
\]
yielding a fully non‑degenerate cohesive linear \(\infty\)-topos.
\end{conjecture}

\section{Synthetic no‑cloning theorem}
\label{sec:cloning}

In categorical quantum mechanics, the no‑cloning theorem states that there is no quantum operation that perfectly duplicates an arbitrary unknown quantum state \cite{AbramskyCoecke2004,Selinger2004}.  We prove a synthetic version of this principle within the topos \(\mathbf{H}_{\mathbb{Q}}\).  The proof uses only the contravariant Yoneda embedding and the geometry of matrix algebras; the quantum modality does not appear.  We emphasise that the theorem concerns natural transformations between the system functors, not the cloning of states (which are absent for non‑commutative algebras in this model, as discussed below).

\subsection{The Yoneda embedding and morphisms between representables}

Recall that \(\mathbf{H}_{\mathbb{Q}} = \Fun(\CAlgfd, \mathbf{H}_{\mathrm{sm}})\) with the restricted site (Section~\ref{sec:site}).  
For each finite‑dimensional \(C^{*}\)-algebra \(A\), the representable sheaf \(\mathbf{y}_A\) is defined by
\[
\mathbf{y}_A(B) \;=\; \Hom_{\CAlgfd}(A, B) \cdot *_{\mathbf{H}_{\mathrm{sm}}},
\]
where \(\cdot\) denotes the copower of the set \(\Hom(A,B)\) with the terminal object of \(\mathbf{H}_{\mathrm{sm}}\).  
The assignment \(A \mapsto \mathbf{y}_A\) is \emph{contravariant}: a centre‑preserving unital \(*\)-homomorphism \(f : A \to B\) induces a natural transformation
\[
\mathbf{y}_f : \mathbf{y}_B \;\longrightarrow\; \mathbf{y}_A,
\]
given at each \(C\) by precomposition: \((\mathbf{y}_f)_C : \Hom(B,C) \to \Hom(A,C)\), \(h \mapsto h \circ f\).  
Thus the Yoneda embedding is a fully faithful functor
\[
\mathbf{y}_{(-)} : (\CAlgfd)^{\mathrm{op}} \;\hookrightarrow\; \mathbf{H}_{\mathbb{Q}}.
\]

For any two algebras \(A,B\), the mapping space between representables is computed by the enriched Yoneda lemma:
\[
\Hom_{\mathbf{H}_{\mathbb{Q}}}(\mathbf{y}_A, \mathbf{y}_B) \;\simeq\; \Hom_{\CAlgfd}(B, A).
\tag{1}
\]
The isomorphism is natural in both variables.  In one direction, a natural transformation \(\alpha : \mathbf{y}_A \Rightarrow \mathbf{y}_B\) is sent to the element \(\alpha_A(\id_A) \in \mathbf{y}_B(A) = \Hom_{\CAlgfd}(B, A)\).  Conversely, a \(*\)-homomorphism \(\phi : B \to A\) defines a natural transformation whose component at \(C\) sends \(h \in \mathbf{y}_A(C) = \Hom(A,C)\) to \(h \circ \phi \in \Hom(B,C)\).

\subsection{Cloning as a natural transformation}

A cloning operation would be a natural family of duplication maps.  Formally, a \emph{cloning natural transformation} is a family of morphisms
\[
c_A : \mathbf{y}_A \;\longrightarrow\; \mathbf{y}_A \otimes \mathbf{y}_A \qquad (A \in \CAlgfd)
\]
that is natural in the contravariant sense: for every centre‑preserving unital \(*\)-homomorphism \(f : A \to B\), the square
\[
\begin{tikzcd}
\mathbf{y}_B \ar[r,"c_B"] \ar[d,"\mathbf{y}_f"'] & \mathbf{y}_B \otimes \mathbf{y}_B \ar[d,"\mathbf{y}_f \otimes \mathbf{y}_f"] \\
\mathbf{y}_A \ar[r,"c_A"] & \mathbf{y}_A \otimes \mathbf{y}_A
\end{tikzcd}
\tag{2}
\]
commutes up to homotopy.  The vertical arrows point downwards because \(\mathbf{y}_{(-)}\) is contravariant; \(f\) induces \(\mathbf{y}_f : \mathbf{y}_B \to \mathbf{y}_A\), and similarly \(\mathbf{y}_f \otimes \mathbf{y}_f\) maps the tensor product of \(\mathbf{y}_B\) with itself to the tensor product of \(\mathbf{y}_A\) with itself.

By the Yoneda lemma (1), the component \(c_A\) corresponds uniquely to a \(*\)-homomorphism
\[
\phi_A : A \otimes A \;\longrightarrow\; A,
\]
i.e., an element of \(\Hom_{\CAlgfd}(A\otimes A, A)\).  Naturality condition (2) translates into a condition on the family \(\{\phi_A\}\).  We derive this carefully.

Take the diagram (2) and evaluate both legs of the square on the element \(\id_B \in \mathbf{y}_B(B)\).  The top‑right leg gives:
\[
(\mathbf{y}_f \otimes \mathbf{y}_f)_{B} \bigl( c_B(\id_B) \bigr).
\]
Now \(c_B(\id_B) \in \mathbf{y}_B(B) \otimes \mathbf{y}_B(B) \cong \mathbf{y}_{B\otimes B}(B)\) corresponds, under Yoneda, to \(\phi_B : B\otimes B \to B\) (more precisely, the natural transformation \(c_B\) is determined by the image of \(\id_B\), which is \(\phi_B\) viewed as an element of \(\Hom(B\otimes B, B)\)).  The map \(\mathbf{y}_f \otimes \mathbf{y}_f\) at stage \(B\) sends a \(*\)-homomorphism \(B\otimes B \to B\) (seen as an element of \(\mathbf{y}_{B\otimes B}(B)\)) to the composite
\[
B\otimes B \xrightarrow{f\otimes f} A\otimes A \xrightarrow{\phi_B} B .
\]
The Yoneda bijection (1) tells us that a morphism \(\mathbf{y}_A \to \mathbf{y}_B\) corresponds to a homomorphism \(B \to A\).  Here we have a morphism \(c_A : \mathbf{y}_A \to \mathbf{y}_{A\otimes A}\).  By (1) with \(B = A\otimes A\), this corresponds to a homomorphism \(A\otimes A \to A\).  So indeed \(\phi_A : A\otimes A \to A\).

Now examine the naturality square (2).  The left vertical map \(\mathbf{y}_f : \mathbf{y}_B \to \mathbf{y}_A\) corresponds, under Yoneda, to precomposition with \(f : A \to B\).  Concretely, for any \(C\) and any \(h \in \mathbf{y}_B(C) = \Hom(B,C)\), we have \((\mathbf{y}_f)_C(h) = h \circ f\).  Similarly, \(\mathbf{y}_f \otimes \mathbf{y}_f : \mathbf{y}_B \otimes \mathbf{y}_B \to \mathbf{y}_A \otimes \mathbf{y}_A\) corresponds to precomposition with \(f\otimes f : A\otimes A \to B\otimes B\) on each factor (since \(\mathbf{y}_B \otimes \mathbf{y}_B \cong \mathbf{y}_{B\otimes B}\) and \(\mathbf{y}_A \otimes \mathbf{y}_A \cong \mathbf{y}_{A\otimes A}\) by Proposition~\ref{prop:day-rep}).  Under these identifications, the morphism \(\mathbf{y}_f \otimes \mathbf{y}_f\) becomes \(\mathbf{y}_{f\otimes f}\).

Now apply the Yoneda bijection to the whole square.  The top horizontal map \(c_B\) corresponds to \(\phi_B : B\otimes B \to B\).  The bottom horizontal map \(c_A\) corresponds to \(\phi_A : A\otimes A \to A\).  The commutativity of the square says that the two composites are equal:
\[
\mathbf{y}_{f\otimes f} \circ c_B \;=\; c_A \circ \mathbf{y}_f .
\]
Translating both sides via Yoneda, we get an equality in \(\Hom(A\otimes A, A)\).  The left side, \(\mathbf{y}_{f\otimes f} \circ c_B\), corresponds to the composite
\[
A\otimes A \xrightarrow{f\otimes f} B\otimes B \xrightarrow{\phi_B} B .
\]
The right side, \(c_A \circ \mathbf{y}_f\), corresponds to the composite
\[
A\otimes A \xrightarrow{\phi_A} A \xrightarrow{f} B .
\]
Thus the naturality condition is
\[
\phi_B \circ (f\otimes f) \;=\; f \circ \phi_A \qquad\text{for all } f : A \to B .
\tag{3}
\]

\subsection{Non‑existence of a cloning natural transformation}

\begin{theorem}[Synthetic no‑cloning]\label{thm:nocloning}
There is no natural transformation \(c : \mathbf{y}_{(-)} \Rightarrow \mathbf{y}_{(-)} \otimes \mathbf{y}_{(-)}\) in \(\mathbf{H}_{\mathbb{Q}}\).
\end{theorem}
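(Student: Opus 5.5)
The plan is to show that the component \(c_A\) already fails to exist for a single well-chosen algebra, so that the naturality condition (3) is never needed. Fix \(A = M_2(\mathbb{C})\). The first step is to identify the target of \(c_A\). By Proposition~\ref{prop:day-rep} we have \(\mathbf{y}_{M_2} \otimes \mathbf{y}_{M_2} \cong \mathbf{y}_{M_2\otimes M_2} \cong \mathbf{y}_{M_4}\), using \(M_2\otimes M_2\cong M_4(\mathbb{C})\). The Yoneda identification (1) then gives
\[
\Hom_{\mathbf{H}_{\mathbb{Q}}}(\mathbf{y}_{M_2}, \mathbf{y}_{M_4}) \;\simeq\; \Hom_{\CAlgfd}(M_4(\mathbb{C}), M_2(\mathbb{C})).
\]
This is exactly the correspondence \(c_A \leftrightarrow \phi_A\) set up before the theorem. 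It therefore suffices to prove that the right-hand side is empty.

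The second step is the algebraic core. Let \(\phi : M_4(\mathbb{C}) \to M_2(\mathbb{C})\) be a unital \(*\)-homomorphism. Its kernel is a two-sided ideal of \(M_4(\mathbb{C})\), which is simple, so the kernel is either \(0\) or all of \(M_4\). Since \(\phi(1)=1\neq 0\), the kernel is proper, hence zero, and \(\phi\) is injective. But \(\dim_{\mathbb{C}} M_4 = 16 > 4 = \dim_{\mathbb{C}} M_2\), a contradiction. So there are no \(*\)-homomorphisms at all, and a fortiori no centre-preserving ones. Thus \(\Hom_{\CAlgfd}(M_4,M_2)=\varnothing\), and hence \(\Hom_{\mathbf{H}_{\mathbb{Q}}}(\mathbf{y}_{M_2},\mathbf{y}_{M_2}\otimes\mathbf{y}_{M_2})\) is empty. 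No component \(c_{M_2}\) exists, and therefore no natural transformation \(c\) exists.

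The same argument works for every \(M_n(\mathbb{C})\) with \(n\ge 2\), since \(n^4>n^2\). I would record this to stress that the obstruction comes from the non-commutative summands. By contrast, at \(A=\mathbb{C}\) the multiplication \(\mathbb{C}\otimes\mathbb{C}\to\mathbb{C}\) exists, so a purely classical cloning map is not excluded.

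The main point needing care is the passage from the \(\mathbf{H}_{\mathrm{sm}}\)-valued mapping object to a bare hom-set, i.e.\ the validity of (1) for representables defined as copowers \(\Hom(A,-)\cdot *\). I would handle it by noting that \(\mathbf{y}_A = \Hom(A,-)\cdot *\) is the free object on the point at stage \(A\). Maps out of it are therefore points of the value of the target at \(A\), i.e.\ points of \(\Hom(A\otimes A, A)\cdot *\). When the indexing set is empty, this copower is the initial object of \(\mathbf{H}_{\mathrm{sm}}\), which has no global points. This gives the required emptiness even at the level of mapping spaces.
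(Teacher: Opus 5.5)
Your proof is correct and is essentially the paper's argument: there too the contradiction comes entirely from the component at $M_2(\mathbb{C})$, via Yoneda and the Day-convolution identification $\mathbf{y}_{M_2}\otimes\mathbf{y}_{M_2}\cong\mathbf{y}_{M_4}$ (a unital $*$-homomorphism $M_4(\mathbb{C})\to M_2(\mathbb{C})$ would be injective by simplicity, contradicting dimensions), while the scalar-restriction step and the naturality condition (3) play no role, exactly as you observe. Your remark that the copower of the empty set is the initial object of $\mathbf{H}_{\mathrm{sm}}$, which has no global points, usefully makes explicit a mapping-space detail the paper leaves implicit.
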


\begin{proof}
Assume such a family \(c\) exists, and let \(\phi_A : A\otimes A \to A\) be the corresponding family of \(*\)-homomorphisms satisfying (3).

\noindent\textbf{Step 1: Restriction to scalars.}
For any \(A\), let \(\iota_A : \mathbb{C} \to A\) be the unital \(*\)-homomorphism sending \(\lambda \in \mathbb{C}\) to the scalar matrix \(\lambda \cdot 1_A\).  This map is centre‑preserving because its image is contained in the centre of \(A\).  Apply (3) with \(f = \iota_A\):
\[
\phi_A \circ (\iota_A \otimes \iota_A) \;=\; \iota_A \circ \phi_{\mathbb{C}} .
\tag{4}
\]
Now \(\mathbb{C}\otimes\mathbb{C} \cong \mathbb{C}\) via the multiplication map, and the only unital \(*\)-homomorphism \(\mathbb{C} \to \mathbb{C}\) is the identity.  Therefore \(\phi_{\mathbb{C}} : \mathbb{C}\otimes\mathbb{C} \to \mathbb{C}\) must be the canonical isomorphism.  Equation (4) says that the restriction of \(\phi_A\) to the scalar subalgebra \((\mathbb{C}\cdot 1_A)\otimes(\mathbb{C}\cdot 1_A) \cong \mathbb{C}\) is exactly the inclusion of scalars.

\noindent\textbf{Step 2: The qubit case.}
Take \(A = M_2(\mathbb{C})\).  Then \(A\otimes A \cong M_4(\mathbb{C})\).
The homomorphism \(\phi_{M_2} : M_4(\mathbb{C}) \to M_2(\mathbb{C})\) is a unital
\(*\)-homomorphism between simple \(C^{*}\)-algebras.  Because \(\phi_{M_2}\) is
unital, it sends the identity matrix of \(M_4(\mathbb{C})\) to the identity
matrix of \(M_2(\mathbb{C})\); hence \(\phi_{M_2} \neq 0\).  Since
\(M_4(\mathbb{C})\) is simple, the kernel of any non‑zero \(*\)-homomorphism
is a proper closed two‑sided ideal, which forces the kernel to be \(\{0\}\).
Thus \(\phi_{M_2}\) is injective.  But \(M_4(\mathbb{C})\) has complex
dimension \(16\), while \(M_2(\mathbb{C})\) has dimension \(4\); an injective
linear map from a \(16\)-dimensional space into a \(4\)-dimensional space
cannot exist.  Contradiction.

Hence our assumption that a natural family \(\{\phi_A\}\) exists is false.
Consequently, no cloning natural transformation \(c\) can exist.
\end{proof}

\subsection{Interpretation and remarks}
In the internal language of \(\mathbf{H}_{\mathbb{Q}}\), the type
\[
\prod_{A:\CAlgfd} \bigl( [A] \to [A] \otimes [A] \bigr)
\]
is empty when naturality in \(A\) is required.  The proof uses only the structure of the site and the Yoneda lemma; it does not involve the quantum modality or the Day convolution.  The obstruction to cloning is purely geometric: the non‑commutative nature of matrix algebras prevents any universal diagonal map that is compatible with all centre‑preserving homomorphisms.

This result aligns with the standard no‑cloning theorem in quantum mechanics, but with an important caveat: the states in our model are morphisms \(\mathbf{1}_{\Day} \to \mathbf{y}_A\), i.e., \(\mathbf{y}_{\mathbb{C}} \to \mathbf{y}_A\), which by (1) correspond to \(*\)-homomorphisms \(A \to \mathbb{C}\).  For non‑commutative algebras such as \(M_2(\mathbb{C})\), there are no unital \(*\)-homomorphisms into \(\mathbb{C}\), so the type of states is empty.  Thus the question of cloning states is somewhat vacuous; the real content of the theorem is that there is no natural way to duplicate quantum \emph{systems} themselves, even before considering states.  A refined modality that admits non‑trivial quantum states (see Conjecture~\ref{conj:channels}) would be required to obtain a closer analogue of the physical no‑cloning theorem.

\section{Limits of the centre modality: quantum channels}
\label{sec:channels}

A quantum channel between finite‑dimensional \(C^{*}\)-algebras \(A\) and \(B\) is a completely positive trace‑preserving (CPTP) linear map \(T : A \to B\).  The category \(\mathbf{CPTP}\) of finite‑dimensional \(C^{*}\)-algebras and CPTP maps, equipped with the algebraic tensor product of algebras and the tensor product of CPTP maps, is a symmetric monoidal category that is \emph{not} cartesian: the tensor product does not coincide with the categorical product (direct sum) \cite{Selinger2004}.  We now examine whether the quantum modality \(Q^{\diamond}\) can faithfully represent this category within the topos \(\mathbf{H}_{\mathbb{Q}}\).

\subsection{The Kleisli category of the centre modality}
The Kleisli category of the comonad \(Q^{\diamond}\) with respect to the Day convolution \(\Day\) is equivalent to the category of coalgebras \(\HtoposQ \simeq \Fun(\FinSet^{\mathrm{op}}, \mathbf{H}_{\mathrm{sm}})\).  By the Yoneda lemma (1), a morphism from \(\mathbf{y}_A\) to \(\mathbf{y}_B\) in this Kleisli category is a map
\[
Q^{\diamond}\mathbf{y}_A = \mathbf{y}_{Z(A)} \;\longrightarrow\; \mathbf{y}_B,
\]
which corresponds uniquely to a \(*\)-homomorphism \(B \to Z(A)\).  Hence the Kleisli category is equivalent to the full subcategory of \(\CAlgfd^{\mathrm{op}}\) spanned by the commutative algebras.  In this category, the tensor product inherited from Day convolution is the cartesian product of the underlying sets (tensor product of commutative algebras), which coincides with the categorical product in the opposite category of commutative algebras.  Thus the Kleisli category is cartesian monoidal.

\subsection{Failure of the naive embedding}
To embed the category of quantum channels, one would need to assign to each CPTP map \(T : A \to B\) a morphism \(\Phi(T) : \mathbf{y}_A \to \mathbf{y}_B\) in the Kleisli category, i.e., a \(*\)-homomorphism \(B \to Z(A)\).  The natural candidate would be the restriction of \(T\) to the centre followed by some variance adjustment, but such a procedure does not yield a \(*\)-homomorphism in the required direction.  In fact, a CPTP map restricts to a unital positive map on the centre, which is automatically a \(*\)-homomorphism only when the centre is one‑dimensional (as in matrix algebras) and the map is unital.  For more general algebras, the restriction is not even multiplicative.

\begin{example}[Depolarising channel]
Let \(A = B = \mathbb{C}^2\) (the commutative algebra of functions on a two‑point set).  The completely depolarising channel is defined by
\[
T(\lambda, \mu) \;=\; \tfrac{1}{2}(\lambda+\mu,\; \lambda+\mu).
\]
This map is CPTP.  Its restriction to the centre is the map itself (since the algebra is commutative).  This map is not multiplicative:
\[
T((1,0)\cdot(0,1)) = T(0,0) = (0,0),\qquad
T(1,0)\cdot T(0,1) = \tfrac{1}{4}(1,1) \neq (0,0).
\]
Thus \(T\) is not a \(*\)-homomorphism.  Hence there is no way to obtain a \(*\)-homomorphism \(B \to Z(A)\) from \(T\).  The naive embedding fails.
\end{example}

\subsection{Structural obstruction from product preservation}
The centre functor \(Z\) is a right adjoint, hence preserves finite limits, and in particular finite products.  Therefore the precomposition comonad \(Q^{\diamond} = Z^{*}\) is product‑preserving.  Any product‑preserving comonad on a symmetric monoidal category has a Kleisli category that is cartesian monoidal (the tensor product coincides with the categorical product), as proved in Section~\ref{sec:linear-logic}.  Since the category of quantum channels is not cartesian, a product‑preserving comonad cannot capture its monoidal structure.  A different modality — one that does not preserve finite products — is required.

\subsection{Refined conjecture}
To faithfully embed quantum channels, one could work with a site that includes the completely positive structure from the outset.  For instance, let \(\mathcal{S}\) be the category whose objects are finite‑dimensional \(C^{*}\)-algebras and whose morphisms are CPTP maps.  This category admits a symmetric monoidal structure via the algebraic tensor product.  One would then need an idempotent strong monoidal comonad \(K\) on \(\mathcal{S}\) whose coalgebras are the commutative algebras and whose Kleisli category (with Day convolution) is equivalent to the original category of quantum channels.  A natural candidate for \(K\) is the abelianisation functor \(A \mapsto A/[A,A]\), but this is a monad on the category of algebras and homomorphisms; adapting it to CPTP maps is non‑trivial.  Alternatively, the construction of completely positive maps via the CPM modality \cite{Selinger2004} on a category of Hilbert modules provides a blueprint.

\begin{conjecture}[Refined embedding of quantum channels]\label{conj:channels-refined}
There exists a site \(\mathcal{S}\) built from finite‑dimensional \(C^{*}\)-algebras (e.g., the category of algebras and CPTP maps, or a suitable category of Hilbert bimodules) and an idempotent strong monoidal comonad \(K\) on \(\mathcal{S}\) such that:
\begin{enumerate}
\item[(i)] The Kleisli category of the induced comonad on \(\Fun(\mathcal{S}, \mathbf{H}_{\mathrm{sm}})\) (with Day convolution) contains \(\mathbf{CPTP}\) as a full subcategory.
\item[(ii)] The tensor product of quantum channels corresponds to the Day convolution.
\item[(iii)] The coalgebras of \(K\) are precisely the commutative algebras, so that the classical core remains the topos of discrete classical field theories.
\end{enumerate}
The centre comonad does not satisfy (ii) because it preserves finite products.  A non‑product‑preserving modality is required.
\end{conjecture}

\subsection{Towards a non‑degenerate model}
\label{sec:nondeg}

The centre modality $Q^{\diamond}$ provides a proof of concept, but its degeneracy — empty state spaces for non‑commutative algebras and a product‑preserving comonad — limits its physical relevance.  We now outline how the model can be enriched to faithfully capture quantum mechanics.  Each direction requires changes to the site, the comonad, or both.

\subsubsection*{1.  From $*$-homomorphisms to completely positive maps}

The fundamental problem with the state space is that the site $\CAlgfd$ uses $*$-homomorphisms.  A state of $\mathbf{y}_A$ is a morphism $\mathbf{y}_{\mathbb{C}} \to \mathbf{y}_A$, which by Yoneda corresponds to a $*$-homomorphism $A \to \mathbb{C}$.  For non‑commutative algebras this set is empty.  The standard solution in categorical quantum mechanics \cite{Selinger2004} is to replace $*$-homomorphisms by completely positive trace‑preserving (CPTP) maps.  Let $\mathcal{C}$ be the category of finite‑dimensional $C^{*}$-algebras and CPTP maps.  The monoidal structure remains the minimal tensor product, and the unit is $\mathbb{C}$.  A state of $A$ in this setting is a CPTP map $\mathbb{C} \to A$, i.e. a density matrix on $A$.  For the qubit $M_2(\mathbb{C})$, the state space is the Bloch sphere, recovering the correct quantum kinematics.

The centre functor $Z$ is not functorial on CPTP maps, so a different comonad is needed.  Two candidates are promising:
\begin{itemize}[nosep]
\item \emph{Abelianisation} $A \mapsto A/[A,A]$.  For a unital simple algebra the commutator ideal is the whole algebra (it contains, for instance, the matrix units \(E_{ij}\) with \(i\neq j\), which generate the full matrix algebra).  Hence abelianisation sends a matrix algebra to the zero algebra, and it does not restrict to an endofunctor on the category of unital \(C^{*}\)-algebras.

Therefore, abelianisation cannot serve as a replacement for the centre
comonad in the unital setting.
\item \emph{The doubling comonad.}  In the purely algebraic setting, the multiplication map \(A\otimes A^{\mathrm{op}}\to A\) is a \(*\)-homomorphism iff \(A\) is commutative, which suggests that a doubling comonad (if it can be constructed) would have commutative coalgebras.  Finding an idempotent strong monoidal comonad on a suitable category of algebras (or CPTP maps) whose underlying functor is \(A\mapsto A\otimes A^{\mathrm{op}}\) and whose coalgebras are precisely the commutative algebras is an open problem.  This is a natural generalisation of the CPM construction \cite{Selinger2004} to \(C^{*}\)-algebras.
  If $D$ can be extended to a comonad on the CPTP category, its Kleisli category would model quantum channels, and the Seely isomorphism may hold because $D$ does not preserve finite products (the product in the CPTP category is the direct sum, and $D(A \oplus B) \not\cong D(A) \oplus D(B)$ in general).
\end{itemize}

\subsubsection*{2.  Escaping product preservation via the opposite site}

The centre comonad is product‑preserving because it is a right adjoint.  Any comonad obtained from a left exact functor on the site will inherit this property and force degeneracy.  Working on the opposite site $\CAlgfd^{\mathrm{op}}$ exchanges limits and colimits: the product becomes the coproduct (free product $\ast_{\mathbb{C}}$), which is not preserved by abelianisation or the centre.  Hence a comonad on the presheaf topos $\Fun(\CAlgfd^{\mathrm{op}}, \mathbf{H}_{\mathrm{sm}})$ defined by precomposition with a monad on $\CAlgfd$ (like abelianisation or the centre itself) would not be product‑preserving.  The Seely isomorphism would then involve the tensor product and the free product. Since abelianisation of a matrix algebra is trivial, this direction is not
viable in the unital setting.

\subsubsection*{3.  Correspondences and Hilbert bimodules}

The most ambitious approach replaces the site of algebras by a 2‑category of Hilbert bimodules (correspondences).  Objects are finite‑dimensional $C^{*}$-algebras; a morphism $A \to B$ is a right Hilbert $B$-module with a non‑degenerate left action of $A$, and composition is given by the interior tensor product.  This site naturally carries a symmetric monoidal structure (the external tensor product) and a doubling comonad sending a bimodule to its conjugate.  The resulting presheaf $\infty$-topos (or rather, the category of presheaves of spaces on this 2‑category) would be a cohesive linear $\infty$-topos in the sense of Schreiber \cite{Schreiber13}.  The quantum modality extracts the centre of the algebra, but states are now sections of bimodules, giving a rich quantum semantics.  This direction aligns with the CPM construction \cite{Selinger2004} and with recent work on linear homotopy types.

\subsubsection*{4.  Concrete first steps}

The following tasks are immediately accessible and would test the viability of the above directions:
\begin{enumerate}[nosep]
\item Compute the abelianisation of matrix algebras and test the Seely isomorphism for the abelianisation comonad on $\Mat^{\mathrm{op}}$ (using the free product, or by restricting to a site where the product is the direct sum, such as non‑unital algebras).
\item Define the doubling comonad $D$ on the category of finite‑dimensional $C^{*}$-algebras and CPTP maps.  Verify that $D$ is strong monoidal and that its coalgebras are the commutative algebras.  Test the Seely isomorphism for representables.
\item Construct the presheaf topos on the 2‑category of Hilbert bimodules and verify the pointwise cohesion axioms.
\end{enumerate}

A detailed research programme incorporating these tasks appears in the accompanying proposal \cite{Woo2026Proposal}.

\section{Remark on classification}
\label{sec:class-remark}

The classification conjecture (Conjecture~\ref{conj:class}) asserts that any accessible, idempotent, product‑preserving comonad on a diagram \(\infty\)-topos \(\Fun(\mathcal{C}, \mathbf{H})\) that commutes with pointwise cohesion and restricts to the identity on constant discrete objects must arise from a coreflective subcategory of \(\mathcal{C}\).  A proof of this conjecture would likely require an \(\infty\)-categorical adaptation of Diaconescu's theorem on geometric morphisms into presheaf toposes \cite{Johnstone, Lurie2009}.  Diaconescu's theorem characterises geometric morphisms \(\mathcal{E} \to \Fun(\mathcal{C}, \mathbf{Sets})\) as corresponding to flat functors \(\mathcal{C} \to \mathcal{E}\).  For a connected geometric morphism with fully faithful inverse image, the corresponding flat functor factors through a coreflective subcategory of \(\mathcal{C}\).  In the \(\infty\)-categorical setting, one expects a similar correspondence using the language of flat \(\infty\)-functors and accessible left exact localisations.  Establishing such a result in full generality is a substantial undertaking and is left for future investigation.

Even without a full classification, the present paper establishes the centre comonad as a canonical example of a quantum modality.  The conjectured correspondence would place this example at the centre of a general theory of cohesive linear \(\infty\)-topoi, demonstrating that the interplay between geometry and quantum logic is governed by coreflective subcategories of algebraic sites.

\section{Detailed computations and auxiliary results}
\label{sec:aux}

\subsection{Explicit description of the right adjoint \(Q_{\bullet}\)}

We gave the formula for the right adjoint \(\Qact\) as a right Kan extension in Section~\ref{sec:topos}.  Here we provide the full computation verifying the adjunction, since this is the core of the Beck–Chevalley package.

For any \(F \in \Htopos\) and \(A \in \CAlgfd\), define
\[
(\Qact F)(A) \;=\; \lim_{(B, f : A \to \Center(B))} F(B),
\]
where the limit runs over the comma category \(A \!\downarrow\! \Center\).  Because \(\CAlgfd\) is small, this limit exists and is finite, hence preserved by the pointwise cohesive modalities.

We verify the adjunction isomorphism \(\Hom(\Qpot G, F) \cong \Hom(G, \Qact F)\):
\[
\begin{aligned}
\Hom(\Qpot G, F) &\simeq \int_{A} \Hom_{\mathbf{H}_{\mathrm{sm}}}( (\Qpot G)(A), F(A) ) \\
&= \int_{A} \Hom_{\mathbf{H}_{\mathrm{sm}}}( G(\Center(A)), F(A) ).
\end{aligned}
\]
On the other hand,
\[
\begin{aligned}
\Hom(G, \Qact F) &\simeq \int_{B} \Hom_{\mathbf{H}_{\mathrm{sm}}}\!\bigl( G(B), (\Qact F)(B) \bigr) \\
&= \int_{B} \Hom_{\mathbf{H}_{\mathrm{sm}}}\!\bigl( G(B), \lim_{(C, \Center(C)\to B)} F(C) \bigr).
\end{aligned}
\]
Using the universal property of the limit, the right side becomes
\[
\int_{B} \lim_{(C, \Center(C)\to B)} \Hom_{\mathbf{H}_{\mathrm{sm}}}( G(B), F(C) ).
\]
A more systematic approach uses the fact that \(\Qpot = \Center^{*}\) is precomposition,
so its right adjoint is given by the right Kan extension along \(\Center\).  For any
functor \(K\), the right Kan extension satisfies
\((\Ran_{K} F)(A) \;=\; \lim_{(B,\, A \to K(B))} F(B)\),
a standard fact about Kan extensions \cite[Proposition~4.3.3]{Lurie2009}.
Taking \(K = \Center\) yields exactly the formula for \(\Qact\).
The additional commutation with cohesion follows because the cohesive modalities
are pointwise and the limit is finite.

\subsection{The closed structure of the Day convolution on coalgebras}

We claimed in Section~\ref{sec:linear-logic} that the symmetric monoidal closed structure of \(\Day\) descends to the coalgebra category \(\HtoposQ\).  We now supply the complete verification.

Let \((X,\alpha), (Y,\beta), (Z,\gamma)\) be coalgebras.  The internal hom in \(\Htopos\) is written \([Y,Z]_{\Day}\).  We claim that \([Y,Z]_{\Day}\) carries a canonical coalgebra structure
\[
\delta : [Y,Z]_{\Day} \;\longrightarrow\; \Qpot[Y,Z]_{\Day},
\]
given by the mate of the composite
\[
\Qpot[Y,Z]_{\Day} \xrightarrow{\;\sim\;} [\Qpot Y, \Qpot Z]_{\Day} \xrightarrow{[\beta^{-1},\gamma]} [Y, Z]_{\Day},
\]
where the first isomorphism is the strength of the comonad (which follows from strong monoidality) and the second map is induced by the coalgebra structures \(\beta^{-1} : \Qpot Y \to Y\) and \(\gamma : Z \to \Qpot Z\).  Concretely, the mate is the map
\[
\delta \;:\; [Y,Z]_{\Day} \xrightarrow{\eta} \Qpot \Qact [Y,Z]_{\Day} \xrightarrow{\sim} \Qpot [\Qpot Y, \Qpot Z]_{\Day} \xrightarrow{} \Qpot[Y,Z]_{\Day}.
\]

To show this is a coalgebra structure, one must verify that \(\Qpot(\delta) \circ \delta = \delta\) (since the comonad is idempotent, this is the only non‑trivial condition).  This follows from the idempotence of \(\Qpot\) and the commutativity of the strength with the comonad structure.  The verification is a routine but lengthy exercise in the calculus of mates; we omit the full diagram, which is analogous to the proof for monads in \cite[Proposition~1.4]{Benton1995}.

Once the coalgebra structure is established, the adjunction
\[
\Hom_{\HtoposQ}(X \Day_Q Y, Z) \;\cong\; \Hom_{\HtoposQ}(X, Y \multimap_Q Z)
\]
with \(Y \multimap_Q Z = ([Y,Z]_{\Day}, \delta)\) follows by passing to coalgebras, because the forgetful functor \(U\) is fully faithful and strong monoidal.  This proves that \((\HtoposQ, \Day_Q)\) is symmetric monoidal closed.

\subsection{Gelfand duality and the classical core in detail}

The identification \(\HtoposQ \simeq \Fun(\FinSet^{\mathrm{op}}, \mathbf{H}_{\mathrm{sm}})\) is a crucial step.  We spell out the proof with all necessary details.

The coalgebra category consists of presheaves \(F\) equipped with an isomorphism \(F \cong F \circ \Center\).  This means that for every \(A\), the map \(F(A) \to F(\Center(A))\) induced by the inclusion \(\Center(A) \hookrightarrow A\) is an isomorphism.  Equivalently, \(F\) factors through the full subcategory \(\cCAlg\) of commutative algebras.  The inclusion \(i : \cCAlg \hookrightarrow \CAlgfd\) induces a restriction functor
\[
i^{*} : \Fun(\CAlgfd, \mathbf{H}_{\mathrm{sm}}) \;\longrightarrow\; \Fun(\cCAlg, \mathbf{H}_{\mathrm{sm}}),
\]
which has a fully faithful left adjoint \(i_{!}\) (left Kan extension).  The essential image of \(i_{!}\) is precisely the category of presheaves that are left Kan extended from commutative algebras, which coincides with the category of coalgebras.  Hence the restriction functor is an equivalence from the coalgebra category to \(\Fun(\cCAlg, \mathbf{H}_{\mathrm{sm}})\).

Now Gelfand duality gives an equivalence \(\cCAlg^{\mathrm{op}} \simeq \FinSet\).  Therefore
\[
\Fun(\cCAlg, \mathbf{H}_{\mathrm{sm}}) \;\simeq\; \Fun(\FinSet^{\mathrm{op}}, \mathbf{H}_{\mathrm{sm}}),
\]
where the functor category on the left is covariant in \(\cCAlg\).  Indeed, \(\mathbf{H}_{\mathbb{Q}}\) was defined as the category of \emph{covariant} functors from \(\CAlgfd\) to \(\mathbf{H}_{\mathrm{sm}}\), so the restriction to \(\cCAlg\) remains covariant.  The centre functor is a comonad on \(\CAlgfd\), so precomposition gives a comonad on \(\Fun(\CAlgfd, \mathbf{H}_{\mathrm{sm}})\).  The coalgebras are functors \(F\) with \(F \cong F \circ \Center\).  Restricting to \(\cCAlg\) and then using Gelfand duality \(\cCAlg^{\mathrm{op}} \simeq \FinSet\) gives a contravariant equivalence.  To match the classical TQFT interpretation, we note that \(\Fun(\cCAlg, \mathbf{H}_{\mathrm{sm}})\) is equivalent to \(\Fun(\FinSet^{\mathrm{op}}, \mathbf{H}_{\mathrm{sm}})\) by composing with the Gelfand equivalence.  This is the identification used throughout.

\subsection{The degenerate Benton model in full}

We provide a complete proof of the claim that the Kleisli category of a product‑preserving comonad on a cartesian closed category is cartesian closed.

\begin{proposition}
Let \(\mathcal{E}\) be a cartesian closed category and \(Q\) an idempotent product‑preserving comonad on \(\mathcal{E}\).  Then the Kleisli category \(\mathcal{E}_Q\) (which coincides with the coalgebra category) is cartesian closed, and the forgetful functor \(U : \mathcal{E}_Q \to \mathcal{E}\) preserves finite products.
\end{proposition}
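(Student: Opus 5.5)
The plan is to use the standard description of the coalgebras of an idempotent comonad as a coreflective full subcategory, and then to build products and exponentials directly inside that subcategory. First I will record that, since \(Q\) is idempotent, the comultiplication \(\delta : Q \Rightarrow QQ\) is an isomorphism. The counit laws \(\varepsilon_{Q}\circ\delta = \id = Q\varepsilon\circ\delta\) then force \(\varepsilon_{QX}\) and \(Q\varepsilon_X\) to be isomorphisms for every \(X\).

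From this I will deduce that a \(Q\)-coalgebra structure on \(X\) exists if and only if \(\varepsilon_X : QX \to X\) is invertible, and that the structure is then unique, namely \(\varepsilon_X^{-1}\). Hence \(\mathcal{E}_Q\) is identified with the full subcategory of such objects, and \(U\) is fully faithful. The coreflection \(U \dashv Q\) is given by the bijection \(\Hom_{\mathcal{E}}(UZ, W) \cong \Hom_{\mathcal{E}_Q}(Z, QW)\), sending \(g\) to \(Qg \circ \varepsilon_Z^{-1}\). I take the identification of the Kleisli and coalgebra categories as stated in Section~\ref{sec:linear-logic}.

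\textbf{Finite products.} Since \(Q\) preserves the terminal object, the counit \(\varepsilon_1 : Q1 \to 1\) is an isomorphism, so \(1 \in \mathcal{E}_Q\). For coalgebras \(X,Y\), the comparison \(Q(X\times Y) \to QX \times QY\) is invertible by hypothesis. Composing it with \(\varepsilon_X \times \varepsilon_Y\) gives \(\varepsilon_{X\times Y}\), using naturality of \(\varepsilon\) with respect to the projections. Hence \(\varepsilon_{X\times Y}\) is an isomorphism and \(X\times Y \in \mathcal{E}_Q\). Because \(\mathcal{E}_Q\) is full, the product in \(\mathcal{E}\) with its projections is also a product in \(\mathcal{E}_Q\). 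This proves simultaneously that \(\mathcal{E}_Q\) has finite products and that \(U\) preserves them.

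\textbf{Exponentials.} For coalgebras \(X,Y\) I will define \(Y^X_Q := Q(Y^X)\), which is a coalgebra because \(\varepsilon_{Q(Y^X)}\) is invertible. The evaluation map is \(Q(Y^X)\times X \xrightarrow{\varepsilon \times \id} Y^X \times X \xrightarrow{\mathrm{ev}} Y\). The universal property then comes from the chain of bijections, natural in \(Z\in\mathcal{E}_Q\),
\[
\Hom_{\mathcal{E}_Q}(Z, Q(Y^X)) \cong \Hom_{\mathcal{E}}(UZ, Y^X) \cong \Hom_{\mathcal{E}}(Z\times X, Y) \cong \Hom_{\mathcal{E}_Q}(Z\times X, Y).
\]
The first bijection is the coreflection, the second is cartesian closedness of \(\mathcal{E}\), and the third uses fullness together with closure of \(\mathcal{E}_Q\) under products from the previous step.

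The only step I expect to need genuine care is checking that this composite bijection is induced by the stated evaluation map, and not merely abstractly. Concretely, I must show that the transpose of a map \(h : Z \to Q(Y^X)\) is \(\mathrm{ev}\circ(\varepsilon h \times \id)\). I would verify this by unwinding the coreflection bijection \(h \mapsto \varepsilon_{Y^X}\circ h\) and then applying naturality of the cartesian closed structure of \(\mathcal{E}\).

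Everything else in the argument is formal. I note that \(U\) need not preserve exponentials, and the statement does not claim that it does.
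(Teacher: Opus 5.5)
Your proposal is correct and follows essentially the same route as the paper. Products of coalgebras are taken in \(\mathcal{E}\) using product preservation of \(Q\), the exponential is \(Q(Y^X)\), and the universal property comes from the same chain of bijections through the coreflection \(U \dashv Q\) and cartesian closedness of \(\mathcal{E}\); along the way you make explicit what the paper leaves implicit, namely that coalgebra structures are exactly inverses of the counit and that \(\varepsilon_{X\times Y}\) is invertible, which is equivalent to the paper's structure map \(\alpha\times\beta\) composed with \(QX\times QY\cong Q(X\times Y)\).
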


\begin{proof}
Since \(Q\) preserves finite products, the category of coalgebras \(\mathcal{E}_Q\) has finite products given by the same products as in \(\mathcal{E}\), with the coalgebra structure on \(X \times Y\) being the composite
\[
X \times Y \xrightarrow{\alpha\times\beta} QX \times QY \cong Q(X\times Y).
\]
The forgetful functor \(U\) is fully faithful and preserves products.  For cartesian closedness, given coalgebras \(X,Y\), define the internal hom \(Y^X\) in \(\mathcal{E}_Q\) as the object \(Q(Y^X)\) with the coalgebra structure induced by the comonad.  The adjunction is verified by the chain
\[
\begin{aligned}
\Hom_{\mathcal{E}_Q}(X \times Y, Z) &\cong \Hom_{\mathcal{E}}(X\times Y, Z) \\
&\cong \Hom_{\mathcal{E}}(X, Z^Y) \\
&\cong \Hom_{\mathcal{E}_Q}(X, Q(Z^Y)),
\end{aligned}
\]
where the last step uses that \(Q\) is idempotent and the adjunction \(U \dashv Q\).  This shows \(\mathcal{E}_Q\) is cartesian closed, with internal hom \(Q(Z^Y)\).  When \(\mathcal{E} = \Htopos\) and \(Q = \Qpot\), this gives the degenerate cartesian model discussed in the text.
\end{proof}

\subsection{The connected geometric morphism: explicit construction}

The geometric morphism \(\Htopos \to \HtoposQ\) was stated in Section~\ref{sec:topos}.  We give the explicit construction here.  The direct image is \(\Qpot\), the inverse image is the forgetful functor \(U : \HtoposQ \hookrightarrow \Htopos\).  The adjunction \(U \dashv \Qpot\) holds because for any coalgebra \(X\) and any presheaf \(F\),
\[
\Hom_{\Htopos}(U X, F) \;\cong\; \Hom_{\Htopos}(X, \Qpot F)
\]
by the definition of the right adjoint.  Indeed, \(\Qpot F\) is the cofree coalgebra on \(F\), and \(U X\) is the underlying presheaf of the coalgebra \(X\).  The counit \(\varepsilon : U \Qpot \to \id\) is the counit of the comonad; the unit \(\eta : \id \to \Qpot U\) is the inverse of the coalgebra structure on \(X\).  Because \(U\) is fully faithful (since every coalgebra morphism is determined by its underlying presheaf map), the geometric morphism is connected.

\subsection{The qubit example in full detail}

We revisit the qubit example from Section~\ref{sec:qubit} with complete calculations.  The qubit algebra is \(M = M_2(\mathbb{C})\).  The representable sheaf \(\mathbf{y}_M\) is given by
\[
\mathbf{y}_M(A) = \Hom_{\CAlgfd}(M, A) \cdot *_{\mathbf{H}_{\mathrm{sm}}}.
\]
Since we restricted the site to centre‑preserving homomorphisms, for any commutative algebra \(C\) (which is its own centre), \(\mathbf{y}_M(C) = \varnothing\) because there are no centre‑preserving \(*\)-homomorphisms from \(M\) into any commutative algebra.  Hence \(\mathbf{y}_M(C) = \varnothing\) for all commutative \(C\).  In particular, the centre of any algebra \(A\) is commutative, so
\[
\Qpot(\mathbf{y}_M)(A) = \mathbf{y}_M(\Center(A)) = \varnothing.
\]
Thus \(\Qpot(\mathbf{y}_M)\) is the empty presheaf, the initial object.

The Day convolution is computed using Proposition~\ref{prop:day-rep}:
\[
\mathbf{y}_M \Day \mathbf{y}_M \;\cong\; \mathbf{y}_{M\otimes M} \;=\; \mathbf{y}_{M_4},
\]
while the cartesian product is
\[
\mathbf{y}_M \times \mathbf{y}_M \;\cong\; \mathbf{y}_{M\oplus M}.
\]
The empty presheaf interacts with Day convolution as the absorbing element: \(\varnothing \Day F \cong \varnothing\) for any \(F\), because the coend over an empty set yields the initial object.  Hence the affine Day convolution on coalgebras, when applied to the qubit (which becomes empty), gives the empty presheaf again.

This demonstrates the extreme degeneracy of the centre modality: it annihilates all non‑commutative information.  The distinction between Day convolution and cartesian product on the qubit is visible only before applying the modality; after decoherence, both collapse.

\subsection{A remark on finite‑dimensional semisimplicity}

The structure theorem for finite‑dimensional \(C^{*}\)-algebras (Wedderburn–Artin) states that every such algebra is a finite direct sum of matrix algebras over \(\mathbb{C}\) \cite[Theorem I.11.2]{Takesaki}.  This fact is essential for the proof of monoidality of the centre (Lemma~\ref{lem:center-monoidal}) and for the simplicity arguments in the no‑cloning theorem.  The semisimplicity also guarantees that the centre is a finite‑dimensional commutative \(C^{*}\)-algebra, hence isomorphic to \(\mathbb{C}^n\) for some \(n\), which is the algebra of functions on a finite set of cardinality \(n\).  This is the bridge to Gelfand duality and the classical field theory interpretation.

\section{Conclusion}
\label{sec:conclusion}

We have constructed the first fully worked example of a cohesive \(\infty\)-topos equipped with a quantum modality.  The model \(\mathbf{H}_{\mathbb{Q}} = \Fun(\mathbf{C}^{*}\mathbf{Alg}_{\mathrm{fd}}, \mathbf{H}_{\mathrm{sm}})\) — with the site restricted to centre‑preserving homomorphisms — satisfies all the axioms of Schreiber's cohesive linear homotopy type theory.  The quantum modality \(Q^{\diamond}\) is given by precomposition with the centre functor; it is an idempotent, product‑preserving, left exact comonad that commutes with the pointwise cohesive modalities.  The Day convolution induced by the tensor product of \(C^{*}\)-algebras makes \(\mathbf{H}_{\mathbb{Q}}\) symmetric monoidal closed, and \(Q^{\diamond}\) is strong monoidal with respect to this structure.

The classical core, i.e. the category of \(Q^{\diamond}\)-coalgebras, is equivalent to the topos \(\Fun(\FinSet^{\mathrm{op}}, \mathbf{H}_{\mathrm{sm}})\) of discrete classical field theories.  On this core, both the cartesian linear‑logic fragment and the inherited Day convolution degenerate to the cartesian product.  Consequently, the model satisfies the Seely isomorphism only in a trivial, cartesian sense, and the linear logic collapses to classical logic. 

We proved a synthetic no‑cloning theorem: there is no natural transformation \(\mathbf{y}_{(-)} \Rightarrow \mathbf{y}_{(-)} \otimes \mathbf{y}_{(-)}\) in \(\mathbf{H}_{\mathbb{Q}}\).  The proof is purely geometric, using the contravariant Yoneda embedding and the simplicity of matrix algebras; it shows that the non‑commutative structure of the site obstructs any universal duplicator of quantum systems.

We also analysed the limits of the centre modality.  The centre functor is a right adjoint, hence product‑preserving; consequently its Kleisli category is cartesian and cannot faithfully embed the category of quantum channels (CPTP maps), whose tensor product is not cartesian.  This led to a refined conjecture for a non‑product‑preserving modality, possibly based on the abelianisation functor or the CPM construction.

Several precise conjectures were formulated: a classification theorem for quantum modalities on diagram \(\infty\)-topoi, a refined embedding of quantum channels, and a derived centre model that would provide a fully non‑degenerate cohesive linear \(\infty\)-topos.  These conjectures chart a clear path for future research.

The present work settles the open problem of finding a concrete model for cohesive linear homotopy type theory.  The construction is discrete (the site carries the trivial topology), but it demonstrates that the cohesive linear axioms are consistent and provides a semantic base for further developments.  Natural next steps include extending the site to infinite‑dimensional algebras with a non‑trivial Grothendieck topology, developing the internal bimodal type theory into a full syntactic calculus, and constructing the derived centre model.  We hope this paper stimulates further investigation at the intersection of higher topos theory, quantum information, and synthetic physics.

\section*{Acknowledgments}
I am deeply grateful to my mathematics teacher, Mr.~Tim, for his unwavering patience and for introducing me to the beauty of the subject.  I also thank the MathOverflow community for generously sharing their ideas and for their constant encouragement.  Finally, I wish to thank my parents, who stood by my side through the hardest of times and made this work possible.

\appendix
\section{\(\infty\)-categorical conventions}
\label{app:infty}

All constructions are in the homotopy‑coherent setting of \(\infty\)-categories \cite{Lurie2009, LurieHA}.  The category \(\CAlgfd\) is an ordinary \(1\)-category, regarded as an \(\infty\)-category via its nerve.  Limits, colimits, and coends are taken in the \(\infty\)-categorical sense.  The smooth \(\infty\)-topos \(\mathbf{H}_{\mathrm{sm}}\) is the \(\infty\)-topos of sheaves of spaces on \(\Mfd\).  Day convolution for \(\infty\)-categories is defined in \cite[Section~2.2.6]{LurieHA}.  The Beck--Chevalley conditions and mate calculus are interpreted in the homotopy \(2\)-category of \(\infty\)-categories.  Internal language remarks refer to the dependent type theory of \(\infty\)-topoi \cite{Schreiber13, RijkeShulmanSpitters}.

\end{document}